\let\OLDthebibliography\thebibliography
\renewcommand\thebibliography[1]{
  \OLDthebibliography{#1}
  \setlength{\parskip}{0pt}
  \setlength{\itemsep}{0pt plus 0.0ex}
}
\newcommand{\bd}{\boldsymbol}
\newcommand{\R}{\mathbb{R}}
\def\numberlikeadb{\global\def\theequation{\thesection.\arabic{equation}}}
\newtheorem{theorem}{Theorem}[section]
\newtheorem{lemma}[theorem]{Lemma}
\newtheorem{corollary}[theorem]{Corollary}
\newtheorem{proposition}[theorem]{Proposition}
\newtheorem{remark}[theorem]{Remark}
\newtheorem{conjecture}[theorem]{Conjecture}
\begin{document}

\title{On bounds for the mode and median of the generalized hyperbolic and related distributions\footnote{J. Math. Anal. Appl. $\mathbf{493}$ (2021) Article 124508, https://doi.org/10.1016/j.jmaa.2020.124508.}}
\author{Robert E. Gaunt\footnote{Department of Mathematics, The University of Manchester, Oxford Road, Manchester M13 9PL, UK, robert.gaunt@manchester.ac.uk}\:\, and Milan Merkle\footnote{School of Electrical Engineering, University of Belgrade, Belgrade, Serbia, emerkle@etf.rs}}

\date{}
\maketitle

\vspace{-5mm}


\begin{abstract}

Except for certain parameter values, a closed form formula for the mode of the generalized hyperbolic (GH) distribution is not available.  In this paper, we exploit results from the literature on modified Bessel functions and their ratios to obtain simple but tight two-sided inequalities for the mode of the GH distribution for general parameter values.  As a special case, we deduce tight two-sided inequalities for the mode of the variance-gamma (VG) distribution,
and through a similar approach we also obtain tight two-sided inequalities for the mode of the  McKay Type I distribution.  The analogous problem for the median is more challenging, but we conjecture some monotonicity results for the median of the VG and  McKay Type I distributions, from we which we conjecture some tight two-sided inequalities for their medians.    Numerical experiments support these conjectures and also lead us to a conjectured tight lower bound for the median of the GH distribution.




\end{abstract}

\noindent{{\bf{Keywords:}}} Generalized hyperbolic distribution; variance-gamma distribution; McKay Type I distribution; mode; median; inequality; modified Bessel function

\noindent{{{\bf{AMS 2010 Subject Classification:}}} Primary 60E05; 62E15

\section{Introduction}
Consider the generalized hyperbolic (GH) distribution with probability density function (PDF)
\begin{equation}\label{ghpdf}p_{\mathrm{GH}}(x)=\frac{(\gamma/\delta)^\lambda}{\sqrt{2\pi}K_\lambda(\delta\gamma)}\mathrm{e}^{\beta(x-\mu)}\frac{K_{\lambda-1/2}(\alpha\sqrt{\delta^2+(x-\mu)^2})}{(\sqrt{\delta^2+(x-\mu)^2}/\alpha)^{1/2-\lambda}}, \quad x\in\mathbb{R},
\end{equation}
where $\gamma=\sqrt{\alpha^2-\beta^2}$ and $K_\lambda$ is a modified Bessel function of the second kind (see Appendix \ref{appendix} for a definition and basic properties used in this paper). If a random variable $X$ has density (\ref{ghpdf}), we write $X\sim GH(\lambda,\alpha,\beta,\delta,\mu)$.  The parameter domain is given by
\begin{align*}&\delta\geq0, \quad \gamma>0, \quad \lambda>0, \\
&\delta>0, \quad \gamma>0, \quad \lambda=0, \\
&\delta>0, \quad \gamma\geq0, \quad \lambda<0,
\end{align*}
and in all cases $\mu\in\mathbb{R}$.  If $\delta=0$ or $\gamma=0$, then the density (\ref{ghpdf}) is defined as the limit obtained by using (\ref{Ktend0}).  In particular, taking $\delta\downarrow0$ in (\ref{ghpdf}) yields
\begin{equation} \label{seven} p_{\mathrm{VG}_2}(x) = \frac{\gamma^{2\lambda}}{\sqrt{\pi} \Gamma(\lambda)}\left(\frac{|x-\mu|}{2\alpha}\right)^{\lambda-1/2} e^{\beta (x-\mu)}K_{\lambda-1/2}(\alpha|x-\mu|), \quad x\in \mathbb{R},
\end{equation}
which is the PDF of the variance-gamma (VG) distribution.  If a random variable $X$ has density (\ref{seven}), we write $X\sim\mathrm{VG}_2(\lambda,\alpha,\beta,\mu)$, and another parametrisation of the distribution that will be used in this paper is given in equation (\ref{vgdef}).  For the GH and VG distributions $\mu$ is the location parameter, and in this paper we shall set it to 0 to simplify the exposition; results for the general case follow by a
simple linear transformation, since $GH(\lambda,\alpha,\beta,\delta,\mu)=_d \mu+GH(\lambda,\alpha,\beta,\delta,0)$.

The GH distribution was introduced by Barndorff-Nielsen \cite{barndorff, barndorff2}, who studied it in the context of modelling dune movements.  The GH distributions are often an excellent fit to financial data, in part because of their semi-heavy tails, and are widely used in financial modelling; see for example, \cite{bibby, eberlein0, eberlein1, eberlein2, mcneil, r99}.   The GH distribution also posses an attractive distributional theory, with special and limiting cases including, amongst others, the VG, hyperbolic, normal-inverse Gaussian and Student's $t$-distributions \cite{eberlein}.
The VG distribution, itself, is also widely used in financial modelling; see, for example, \cite{mcc98, madan, s04}.


Accounts of the distributional theory of GH distributions and their most basic distributional properties, such as the characteristic function, mean, variance, skewness, kurtosis and L\'{e}vy-Khintchine representation can be found in \cite{bibby,hammerstein}, and even some more exotic properties such as Stein characterisations \cite{gaunt gh} and Stein-type lemmas \cite{vy17} have been established.  However, the presence of the modified Bessel function of the second kind in the density (\ref{ghpdf}) can complicate the analysis for certain distributional properties, such as the calculation of moments and absolute moments of general order \cite{bs05,scott}.  This is also the case for the mode of the distribution.  On applying the differentiation formula (\ref{ddbk}) to (\ref{ghpdf}) it follows that the mode $M$ is the unique solution of the equation
\begin{equation}\label{xstar}\frac{x}{\sqrt{\delta^2+x^2}}\cdot\frac{K_{\lambda-3/2}\big(\alpha\sqrt{\delta^2+x^2}\big)}{K_{\lambda-1/2}\big(\alpha\sqrt{\delta^2+x^2}\big)}=\frac{\beta}{\alpha}.
\end{equation}
That $M$ is the unique solution follows from the fact that the entire class of GH distributions is unimodal \cite{yu}.

If $\beta=0$ (in which case the distribution is symmetric about the origin), it can be readily seen from (\ref{xstar}) that $M=0$.  For some other special parameter values, one can also obtain closed-form expressions for the mode; see Section \ref{sec2.4}.  However, in general, one cannot obtain a closed-form solution to (\ref{xstar}).  Motivated by this, in this paper we  exploit the extensive literature on inequalities for ratios of modified Bessel functions of the second kind to obtain a simple but accurate two-sided inequality for the mode of the GH distribution (Theorem \ref{thmghd}), which is the first such result in the literature.  As a consequence, we obtain a bound for the difference between the mean and the mode of the distribution (Corollary \ref{corghd}).  The inequalities take a particularly neat form for the VG subclass, and we collect them in Corollary \ref{thmvg}.

The VG distribution is also sometimes referred to as the McKay Type II distribution; the McKay Type I distribution \cite{mckay}, which is used in performance-analysis problems in wireless communication systems \cite{ha04}, has a PDF with a similar functional form with the modified Bessel function $K_\nu$ replaced by the modified Bessel function of the first kind $I_\nu$. For $b>0$, $c>1$, $m>-1/2$ the PDF is
\begin{equation}\label{seven7} p_{\text{McKay I}}(x)=\frac{\sqrt{\pi}(c^2-1)^{m+1/2}}{2^mb^{m+1}\Gamma(m+\frac{1}{2})}x^m\mathrm{e}^{-cx/b}I_m\Big(\frac{x}{b}\Big), \quad x>0.
\end{equation}
Given the similarity between the PDFs of the McKay Type I and VG distributions, it is natural to extend our study to the mode of this distribution, which has yet to be studied in the literature.  Through the differentiation formula (\ref{ddbi}) we see that the mode  $M$ is the unique solution of the equation
\begin{equation}\label{ystar}\frac{I_m(x/b)}{I_{m-1}(x/b)}=\frac{1}{c}.
\end{equation}
In Theorem \ref{thmmckay}, we show that the McKay Type I distribution is unimodal, and we use bounds from the literature on inequalities for ratios of modified Bessel functions of the first kind to obtain tight two-sided inequalities for the mode.

It should be noted that, for given fixed parameter values, equations (\ref{xstar}) and (\ref{ystar}) can be solved via numerical methods such as the Newton-Raphson method, which would lead to estimates for the mode of the GH, VG and McKay Type I distributions that are more accurate than the bounds obtained in this paper.  The purpose of our paper, however, is to give accurate analytic bounds than hold for a wide range of parameter values.  Our bounds can also be used to produce informed initial guesses for numerical methods.


Having obtained bounds for the mode of the GH, VG and McKay Type I distributions, it is natural to seek analogous results for the median, which is the focus of Section \ref{sec3}.  This turns out to be a harder problem.
We contend ourselves with conjecturing some monotonicity results for the medians of the VG and McKay Type I distributions.  These results are motivated through the representation of the VG and McKay Type I distributions as the difference and sum, respectively, of two independent gamma random variables.  As a consequence, we are able to conjecture a tight two-sided inequality for the median of the McKay Type I distribution in terms of medians of gamma distributions, as well as an upper bound for the VG distribution in terms of the median of the gamma distribution.  Following the conjecture of \cite{cr86}, the problem of bounding the median of the gamma distribution has received considerable attention in the literature, and we are able to draw on this literature to conjecture tight bounds for the VG and McKay distribution.  In addition, we conjecture tight lower bounds for the median of the GH and VG distributions. We support these conjectures with numerical results, and also detail an approach involving Schur convexity and stochastic ordering that could ultimately lead to a proof of the conjectured monotonicity results for sums and differences of independent gamma random variables.  We hope our study will inspire future work towards verifying these conjectures.


\section{Inequalities for the mode of the generalized hyperbolic and related distributions}\label{sec2}

The following bounds for the ratio $K_{\nu-1}(x)/K_{\nu}(x)$ will be used in the sequel. For $x>0$,
\begin{equation}\label{seg1}\frac{x}{\nu+\sqrt{\nu^2+x^2}}<\frac{K_{\nu-1}(x)}{K_\nu(x)}<\frac{x}{\nu-1+\sqrt{(\nu-1)^2+x^2}}, \quad \nu\in\mathbb{R},
\end{equation}
and
\begin{equation}\label{seg2}\frac{K_{\nu-1}(x)}{K_\nu(x)}>\frac{x}{\nu-1/2+\sqrt{(\nu-1/2)^2+x^2}}, \quad \nu>1/2.
\end{equation}
We have equality in (\ref{seg2}) for $\nu=1/2$ and the inequality is reversed for  $\nu<1/2$.  The lower bound in (\ref{seg1}) was proved by \cite{ln10} and, as noted in Remark 2 of \cite{rs16}, the upper bound can in fact be derived by a very short calculation that uses the lower bound and the fact that $K_{\nu}(x)=K_{-\nu}(x)$ (see (\ref{parity})).  Inequality (\ref{seg2}) was established by \cite{segura} and the fact that the inequality is reversed for $\nu<1/2$ follows from exactly the same argument as used in Remark 2 of \cite{rs16}.

An application of an iterative scheme given in Section 3.2 of \cite{segura} yields other bounds for the ratio $K_{\nu-1}(x)/K_{\nu}(x)$ that will be used in this paper.  Consider the relation
\begin{equation}\label{relk}\frac{K_{\nu-1}(x)}{K_\nu(x)}=\bigg(\frac{2(\nu-1)}{x}+\frac{K_{\nu-2}(x)}{K_{\nu-1}(x)}\bigg)^{-1}
\end{equation}
(see \cite[equation (29)]{segura}).  Then applying inequality (\ref{seg2}) to the right-hand side of (\ref{relk}) yields the following inequality.  For $x>0$,
\begin{equation}\label{seg3}\frac{K_{\nu-1}(x)}{K_\nu(x)}<\frac{x}{\nu-1/2+\sqrt{(\nu-3/2)^2+x^2}}, \quad \nu>3/2.
\end{equation}
We have equality for $\nu=3/2$ and the inequality is reversed for  $\nu<3/2$.  In its range of validity (\ref{seg3}) outperforms the upper bound in (\ref{seg1}).  Also, the reversed inequality outperforms the lower bound (\ref{seg2}) for $1<\nu<3/2$, but the lower bound (\ref{seg2}) is more accurate for $1/2<\nu<1$.

We will also need the following two-sided inequality for a ratio of modified Bessel functions of the first kind, due to \cite{segura} (see also \cite{amos74,ln10,rs16}), which states that, for $x>0$,
\begin{equation}\label{sqrtbb}\frac{x}{\nu-1/2+\sqrt{(\nu+1/2)^2+x^2}}<\frac{I_{\nu}(x)}{I_{\nu-1}(x)}<\frac{x}{\nu-1/2+\sqrt{(\nu-1/2)^2+x^2}},
\end{equation}
where the lower bound holds for $\nu\geq0$ and the upper bound is valid for $\nu\geq1/2$.  We will also need the following bound of \cite{rs16}
\begin{equation}\label{segi2}\frac{I_{\nu}(x)}{I_{\nu-1}(x)}<\frac{x}{\nu-1+\sqrt{(\nu+1)^2+x^2}}, \quad x>0,\: \nu\geq0.
\end{equation}
 which improves on the upper bound in (\ref{sqrtbb}) for $0<x<2\sqrt{\nu(2\nu+1)}$.


\subsection{The mode of the generalized hyperbolic distribution}

In what follows, we fix $\beta>0$; all inequalities are reversed for $\beta<0$ because, for  $X_1\sim GH(\lambda,\alpha,\beta,\delta,0)$ and $X_2\sim GH(\lambda,\alpha,-\beta,\delta,0)$, $X_1$ is equal in law to $-X_2$.

\begin{theorem}\label{thmghd}Let $X\sim GH(\lambda,\alpha,\beta,\delta,0)$ and let $M$ denote its mode. Fix $\beta>0$.

\vspace{2mm}

\noindent (i) For $\delta>0$ and $\lambda\in\mathbb{R}$,
\begin{align}\label{mode1}\frac{\beta}{\gamma^2}\Big[\lambda-3/2+\sqrt{(\lambda-3/2)^2+\delta^2\gamma^2}\Big]< M<\frac{\beta}{\gamma^2}\Big[\lambda-1/2+\sqrt{(\lambda-1/2)^2+\delta^2\gamma^2}\Big].
\end{align}
The lower bound also holds for $\delta=0$, $\lambda>3/2$, and the upper bound also holds for $\delta=0$, $\lambda>1/2$.

For a restricted range of parameter values, the upper and lower bounds of (\ref{mode1}) can be improved as follows. Let $\delta>0$. Then, for $\lambda>1$,
\begin{equation}\label{mode2} M<\frac{\beta}{\gamma^2}\Big[\lambda-1+\sqrt{(\lambda-1)^2+\delta^2\gamma^2}\Big].
\end{equation}
We have equality in (\ref{mode2}) if $\lambda=1$, and the inequality is reversed for $\lambda<1$.  The upper bound (\ref{mode2}) is also valid for $\delta=0$, $\lambda>1$ and we have equality for $\delta=0$, $\lambda=1$ (that is $M=0$).  Also, for $\delta>0$, $\lambda>2$,
\begin{equation}\label{mode29} M>\frac{\beta}{\gamma^2}\bigg[\lambda-1+\sqrt{(\lambda-1)^2+\gamma^2\bigg(\delta^2+\frac{3-2\lambda}{\alpha^2}\bigg)}\bigg].
\end{equation}
We have equality in (\ref{mode29}) if $\lambda=2$, and the inequality is reversed for $\lambda<2$.  The lower bound (\ref{mode29}) is also valid for $\delta=0$, $\lambda>2$, we have equality for $\delta=0$, $\lambda=2$, and the reverse inequality also holds for $\delta=0$, $1<\lambda<2$.
\vspace{2mm}

\noindent (ii) The two-sided inequality (\ref{mode1}) is tight in the limits $\lambda\rightarrow\infty$ and $\delta\rightarrow\infty$.  Moreover, letting $\sigma=1/\gamma$ and $\theta=\beta/\gamma^2$, for fixed $\theta$, we have that in the limit $\sigma\rightarrow\infty$,
\begin{align}M&\sim \begin{cases}\displaystyle \theta(2\lambda-3), & \quad \lambda>3/2, \:\delta\geq0, \\[2pt]
\displaystyle\frac{\theta}{\log \sigma}, & \quad \lambda=3/2,\:\delta\geq0, \\[12pt]
\displaystyle \frac{2^{2\lambda-2}\Gamma(\lambda-\frac{1}{2})}{\Gamma(\frac{3}{2}-\lambda)}\frac{\theta\delta^{3-2\lambda}}{\sigma^{3-2\lambda}}, & \quad 1/2<\lambda<3/2,\:\delta>0, \\[12pt]
\label{asym4}\displaystyle \frac{\theta\delta^2}{(1-2\lambda)\sigma^2}, & \quad \lambda<1/2, \:\delta>0. \end{cases}
\end{align}
Additionally, if $\delta=0$, then $M=0$ if $0<\lambda\leq1$, and if $\lambda>1$ in the limit $\sigma\downarrow0$,  we have
\begin{equation}\label{asym5}M\sim 2\theta(\lambda-1), \quad \lambda>1.
\end{equation}
Now let $b_a(\lambda,\alpha,\beta,\delta)=\frac{\beta}{\gamma^2}[\lambda -a+\sqrt{(\lambda -a)^2+\delta^2\gamma^2}]$. It follows from (\ref{asym4}) (cases $\lambda>3/2$ and $\lambda<1/2$) and (\ref{asym5}), respectively, that lower bound of (\ref{mode1}), the upper bound of (\ref{mode1}) and inequality (\ref{mode2}), with values $a=3/2$, $a=1/2$ and $a=1$, respectively, are best possible, in their range of validity, amongst all bounds of the form $b_a(\lambda,\alpha,\beta,\delta)$ with universal constants $a\in\mathbb{R}$ that do not involve the parameters $\lambda$, $\alpha$, $\beta$ and $\delta$.

\end{theorem}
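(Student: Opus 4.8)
The plan is to read off the large-$\sigma$ (equivalently, small-$\gamma$) behaviour of $b_a$ for each fixed real $a$ and to confront it with the asymptotics of $M$ established in part (ii): a bound of the form $b_a$ with a universal constant $a$ can qualify as a lower (resp.\ upper) bound for $M$ only if the corresponding inequality survives these limits. First I would record the elementary asymptotics of $b_a$. With $\theta=\beta/\gamma^2$ and $\delta>0$ held fixed, so that $b_a=\theta\big[(\lambda-a)+\sqrt{(\lambda-a)^2+\delta^2/\sigma^2}\,\big]$, one has, as $\sigma\to\infty$,
\[
b_a\sim 2\theta(\lambda-a)\ \text{ if }\ \lambda>a,\qquad b_a\sim\frac{\theta\delta^2}{2(a-\lambda)\,\sigma^2}\ \text{ if }\ \lambda<a,
\]
the second estimate obtained by rationalising $\sqrt{v^2+\delta^2/\sigma^2}-v$ with $v=a-\lambda>0$; while for $\delta=0$ one simply has $b_a=2\theta(\lambda-a)$ when $\lambda>a$ and $b_a=0$ otherwise, with no $\sigma$-dependence. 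I would also note that, for $\beta>0$, the map $a\mapsto b_a$ is strictly decreasing (a one-line derivative check), so among bounds of the form $b_a$ a lower bound for $M$ is best possible precisely when $a$ is as small as possible, and an upper bound precisely when $a$ is as large as possible; admissibility of $a=3/2$, $a=1/2$ and $a=1$ is exactly the content of part (i), so only the optimality of these values remains to be shown.

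For the lower bound in (\ref{mode1}): if $a<3/2$, take $\lambda>3/2$ and $\delta>0$ and let $\sigma\to\infty$; combining the first estimate above with the case $\lambda>3/2$ of (\ref{asym4}) gives $b_a/M\to(\lambda-a)/(\lambda-3/2)>1$, so $b_a>M$ for all large $\sigma$ and $b_a$ is not a valid lower bound. For the upper bound in (\ref{mode1}): if $a>1/2$, take $0<\lambda<1/2$ and $\delta>0$ and let $\sigma\to\infty$; the second estimate together with the case $\lambda<1/2$ of (\ref{asym4}) gives $b_a/M\to(1-2\lambda)/(2a-2\lambda)<1$, so $b_a<M$ eventually and $b_a$ fails as an upper bound. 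For (\ref{mode2}): if $a>1$, take $\delta=0$ and $\lambda>a$ (so $\lambda>1$) and let $\sigma\downarrow0$; then $b_a=2\theta(\lambda-a)$ while $M\sim2\theta(\lambda-1)$ by (\ref{asym5}), whence $M/b_a\to(\lambda-1)/(\lambda-a)>1$ and $b_a$ again fails. In each case the opposite perturbation ($a>3/2$, $a<1/2$, $a<1$, respectively) yields, by the monotonicity of $b_a$ in $a$, a bound that remains valid but is strictly weaker, so the three stated values of $a$ are optimal within $\{b_a:a\in\mathbb{R}\}$.

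I do not expect a genuine obstacle here: granting the asymptotics of part (ii), the proof reduces to a short computation of limits of $b_a$. The points requiring care are checking that the parameter regimes used to defeat the non-optimal bounds — notably $0<\lambda<1/2$ with $\delta>0$, and $\delta=0$ with $\lambda>a>1$ — genuinely lie in the stated range of validity of the bound being improved (so that the failure is a real failure, not merely outside the claimed domain), and confirming the strict monotonicity of $a\mapsto b_a$, which is what makes ``best possible'' unambiguous.
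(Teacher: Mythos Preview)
Your argument is correct and follows essentially the same route as the paper: the paper computes the limits of $b_{3/2}$, $b_{1/2}$, $b_1$ in the regimes $(\sigma\to\infty,\lambda>3/2)$, $(\sigma\to\infty,\lambda<1/2,\delta>0)$ and $(\sigma\downarrow0,\delta=0,\lambda>1)$, observes that they agree with the asymptotics of $M$ from part (ii), and infers optimality from that agreement; you compute the same limits for general $a$ and show directly that any ``improved'' value of $a$ produces a ratio $b_a/M$ on the wrong side of $1$. One small caveat: your blanket claim that $a\mapsto b_a$ is strictly decreasing needs the qualification $\delta>0$ (for $\delta=0$ and $a\geq\lambda$ you have $b_a\equiv0$), but you already record this special form and it does not affect the argument.
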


\begin{proof} (i) Let us derive the upper bound in (\ref{mode1}).  We have that $M$ is the unique solution to (\ref{xstar}).  Let $\lambda\in\mathbb{R}$.  By applying the lower bound of (\ref{seg1}) to (\ref{xstar}) we obtain the following inequality for $x$:
\begin{equation}\label{inti}\frac{x}{\sqrt{\delta^2+x^2}}\cdot\frac{\alpha \sqrt{\delta^2+x^2}}{\lambda-1/2+\sqrt{(\lambda-1/2)^2+\alpha^2(\delta^2+x^2)}}<\frac{\beta}{\alpha}.
\end{equation}
The assumption $\delta>0$ ensures that, for all $\lambda\in\mathbb{R}$, the left-hand side of (\ref{inti}) is strictly less than $\beta/\alpha$ for some $x>0$. If $\delta=0$ and we restrict $\lambda>1/2$, then it can be seen that the left-hand side of (\ref{inti}) is strictly less than $\beta/\alpha$ for some $x>0$.  However, if $\delta=0$ and $\lambda\leq1/2$, then the left-hand side is strictly greater than $1>\beta/\alpha$, meaning that there is no solution $x>0$ to the inequality. Simplifying and rearranging (\ref{inti}), in the cases $\delta>0,\lambda\in\mathbb{R}$ or $\delta=0,\lambda>1/2$, yields a quadratic inequality which has solution
\begin{equation*}x<\frac{\beta}{\gamma^2}\Big[\lambda-1/2+\sqrt{(\lambda-1/2)^2+\delta^2\gamma^2}\Big],
\end{equation*}
from which we deduce the upper bound (\ref{mode1}) for $M$.  For the other inequalities in the statement of the theorem, one also needs to carefully consider for which values of $\lambda$ and $\delta$ the inequalities are valid.  For reasons of brevity and ease of reading, we omit these details in the remainder of the proof.  To obtain the lower bound of (\ref{mode1}), we apply the upper bound of (\ref{seg1}) to (\ref{xstar}) and proceed in the same manner.  For $\lambda>1$, we can apply the refined inequality (\ref{seg2}) to (\ref{xstar}), which yields inequality (\ref{mode2}); the cases $\lambda=1$ and $\lambda<1$ are dealt with similarly.  In the same manner, for $\lambda>2$ (and $\lambda=2$, $\lambda<2$) we obtain inequality (\ref{mode29}) by this time applying inequality (\ref{seg3}) to (\ref{xstar}).

\vspace{2mm}

\noindent (ii) It is clear that the two-sided inequality (\ref{mode1}) is sharp as $\lambda\rightarrow\infty$ and $\delta\rightarrow\infty$.  The more involved analysis involves the limits as $\sigma\rightarrow\infty$ and $\sigma\downarrow0$. We establish the four cases of (\ref{asym4}) separately. With $\sigma=1/\gamma$ and $\theta=\beta/\gamma^2$ equation (\ref{xstar}) reads
\begin{equation}\label{xstar2}f_{\lambda,\sigma}(x):=\frac{\sqrt{\theta^2+\sigma^2}}{\theta}\cdot\frac{x}{\sqrt{\delta^2+x^2}}\cdot\frac{K_{\lambda-3/2}\big(\frac{\sqrt{\theta^2+\sigma^2}}{\sigma^2}\sqrt{\delta^2+x^2}\big)}{K_{\lambda-1/2}\big(\frac{\sqrt{\theta^2+\sigma^2}}{\sigma^2}\sqrt{\delta^2+x^2}\big)}=1.
\end{equation}
Applying (\ref{Ktend0}) to (\ref{xstar2}) gives that, as $\sigma\rightarrow\infty$,
\begin{align}f_{\lambda,\sigma}(x)\sim \begin{cases}\displaystyle \frac{x}{\theta(2\lambda-3)}, & \quad \lambda>3/2,\:\delta\geq0, \\[12pt]
\displaystyle\frac{x\log\sigma}{\theta}, & \quad \lambda=3/2,\:\delta\geq0, \\[12pt]
\displaystyle \frac{\Gamma(\frac{3}{2}-\lambda)}{\Gamma(\lambda-\frac{1}{2})}\frac{2^{2-2\lambda}\sigma^{3-2\lambda}}{\theta }x(\delta^2+x^2)^{\lambda-3/2}, & \quad 1/2<\lambda<3/2,\:\delta>0, \\[12pt]
\label{asym55}\displaystyle \frac{(1-2\lambda)\sigma^2 x}{\theta(x^2+\delta^2)}, & \quad \lambda<1/2, \:\delta>0. \end{cases}
\end{align}
Here we used that $\sqrt{\theta^2+\sigma^2}\sim \sigma$, as $\sigma\rightarrow\infty$, and the standard formula $\Gamma(u+1)=u\Gamma(u)$, as well as that $x\ll\sigma$, which can be seen from inequality (\ref{mode1}) which tells us that $x=O(1)$, as $\sigma\rightarrow\infty$. The limiting forms for the cases $\lambda>3/2$ and $\lambda=3/2$ of (\ref{asym4}) immediately follow from combining (\ref{xstar2}) and (\ref{asym55}).  Consider now the case $1/2<\lambda<3/2$. Suppose that the leading term in the asymptotic expansion of $x$ is of the form $x\sim a_0\sigma^m+o(\sigma^m)$, as $\sigma\rightarrow\infty$, where $a_0$ and $m$ are constants to be determined.  Then combining (\ref{xstar2}) and (\ref{asym55}) gives that
\begin{align*} \frac{\Gamma(\frac{3}{2}-\lambda)}{\Gamma(\lambda-\frac{1}{2})}\frac{2^{2-2\lambda}}{\theta }a_0\sigma^{3-2\lambda+m}\big(1+o(1)\big)\big(\delta^2+a_0^2\sigma^{2m}(1+o(1))\big)^{\lambda-3/2}=1.
\end{align*}
It follows that we must take $m=2\lambda-3$ and therefore, as $1/2<\lambda<3/2$, we have $x\sim a_0\sigma^{2\lambda-3}$, as $\sigma\rightarrow\infty$, so that $x\ll\delta$.  Therefore $a_0$ satisfies
\begin{align*}\frac{\Gamma(\frac{3}{2}-\lambda)}{\Gamma(\lambda-\frac{1}{2})}\frac{2^{2-2\lambda}}{\theta }a_0\delta^{2\lambda-3}=1,
\end{align*}
whence on rearranging for $a_0$ we deduce the limiting form (\ref{asym4}) in the case $1/2<\lambda<3/2$. For the case $\lambda<1/2$ a similar analysis leads to the desired limiting form $M\sim\frac{\theta\delta^2}{(1-2\lambda)\sigma^2}$.

Now let $\delta=0$. If $\lambda=1$, the density (\ref{seven}) becomes $p(x)=N_{\theta,\sigma}\mathrm{e}^{(x\theta  -|x|\sqrt{\theta^2+\sigma^2})/\sigma^2}$, $x\in\mathbb{R}$, where $N_{\theta,\sigma}$ is the normalising constant, and therefore $M=0$ when $\lambda=1$.  For $0<\lambda<1$, we see from (\ref{Ktend0}) that the density (\ref{seven}) blows up as $x\rightarrow0$ (since $\delta=0$), and so the unique mode is $0$.
Now we consider the case $\lambda>1$ and establish the limiting form (\ref{asym5}).  First we let $\delta=0$ in (\ref{xstar}) to obtain
\begin{equation}\label{xstar3}f_{\lambda,0}(x):=\frac{\sqrt{\theta^2+\sigma^2}}{\theta}\cdot\frac{K_{\lambda-3/2}\big(\frac{\sqrt{\theta^2+\sigma^2}}{\sigma^2}x\big)}{K_{\lambda-1/2}\big(\frac{\sqrt{\theta^2+\sigma^2}}{\sigma^2}x\big)}=1.
\end{equation}
Applying (\ref{Ktendinfinity}) to (\ref{xstar3}) gives that, as $\sigma\downarrow0$,
\begin{align} f_{\lambda,0}(x)&= \bigg(1+\frac{\sigma^2}{2\theta^2}\bigg)\frac{\displaystyle1+(4(\lambda-3/2)^2-1)\cdot\frac{\sigma^2}{8x}}{\displaystyle1+(4(\lambda-1/2)^2-1)\cdot\frac{\sigma^2}{8x}}+O(\sigma^4)\nonumber \\
\label{order4}&=1+\frac{(x+2-2\lambda)\sigma^2}{2x}+O(\sigma^4).
\end{align}
On combining (\ref{xstar3}) and (\ref{order4}) we deduce that in the case $\delta=0$, we have $M\sim 2\theta(\lambda-1)$, as $\sigma\downarrow0$, as required.

Finally, we prove that the constants $a=3/2$, $a=1/2$ and $a=1$ in the double inequality (\ref{mode1}) and inequality (\ref{mode2}) are best possible.  Fix $\theta$.  Then
\begin{align*}b_{\frac{3}{2}}(\lambda,\alpha,\beta,\delta)&\sim \theta(2\lambda-3), \quad \sigma\rightarrow\infty,\: \lambda>3/2, \: \delta\geq0, \\
b_{\frac{1}{2}}(\lambda,\alpha,\beta,\delta)&\sim \frac{\theta\delta^2}{(1-2\lambda)\sigma^2}, \quad \sigma\rightarrow\infty,\: \lambda<1/2,\: \delta>0, \\
b_1(\lambda,\alpha,\beta,\delta)&\sim 2\theta(\lambda-1), \quad \sigma\downarrow0,\: \lambda>1,\: \delta=0,
\end{align*}
which are in agreement with the limiting forms (\ref{asym4}) (cases $\lambda>3/2$ and $\lambda<1/2$) and (\ref{asym5}) for the mode of the GH distribution, from which it follows that the constants $a=3/2$, $a=1/2$ and $a=1$ are best possible.
\end{proof}

\begin{remark}As we shall see later, the re-parametrisation $\sigma=1/\gamma$ and $\theta=\beta/\gamma^2$ is also convenient when working with the VG distribution.
\end{remark}

It is of interest to estimate the discrepancy between the mean and mode of the GH distribution.  Let $X\sim GH(\lambda,\alpha,\beta,\delta,0)$.  Then the following exact formula is available \cite{bb80}:
\begin{equation}\label{expfor}\mathbb{E}[X]=\frac{\delta\beta K_{\lambda+1}(\delta\gamma)}{\gamma K_\lambda(\delta\gamma)}.
\end{equation}

\begin{proposition}\label{pmean}Let $X\sim GH(\lambda,\alpha,\beta,\delta,0)$.  Fix $\beta>0$.  Then, for $\lambda\in\mathbb{R}$,
\begin{equation}\label{mean1} \frac{\beta}{\gamma^2}\Big[\lambda+\sqrt{\lambda^2+\delta^2\gamma^2}\Big]<\mathbb{E}[X]<\frac{\beta}{\gamma^2}\Big[\lambda+1+\sqrt{(\lambda+1)^2+\delta^2\gamma^2}\Big].
\end{equation}
For a restricted range of parameter values, the upper and lower bounds of (\ref{mean1}) can be improved as follows. For $\lambda>-1/2$, we have
\begin{equation}\label{mean2} \mathbb{E}[X]<\frac{\beta}{\gamma^2}\Big[\lambda+1/2+\sqrt{(\lambda+1/2)^2+\delta^2\gamma^2}\Big].
\end{equation}
 We have equality in (\ref{mean2}) if $\lambda=-1/2$, and the inequality is reversed for $\lambda<-1/2$.  Also, for $\lambda>1/2$,
\begin{equation}\label{mean29} \mathbb{E}[X]>\frac{\beta}{\gamma^2}\Big[\lambda+1/2+\sqrt{(\lambda-1/2)^2+\delta^2\gamma^2}\Big].
\end{equation}
 We have equality in (\ref{mean29}) if $\lambda=1/2$, and the inequality is reversed for $\lambda<1/2$.
\end{proposition}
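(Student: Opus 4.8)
The plan is to plug the exact formula (\ref{expfor}) for $\mathbb{E}[X]$ into the two-sided bounds (\ref{seg1}), (\ref{seg2}) and (\ref{seg3}) for ratios of modified Bessel functions of the second kind, in exactly the spirit of the proof of Theorem \ref{thmghd}(i). The one twist is that (\ref{expfor}) features the ratio $K_{\lambda+1}(\delta\gamma)/K_\lambda(\delta\gamma)$, which is the reciprocal of $K_{\nu-1}(x)/K_\nu(x)$ at $\nu=\lambda+1$, $x=\delta\gamma$; since every quantity appearing in (\ref{seg1})--(\ref{seg3}) is positive for $x>0$, I will simply take reciprocals of those inequalities (reversing their direction) before substituting, and then multiply through by $\delta\beta/\gamma>0$.

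Concretely, for $\delta>0$ and $\gamma>0$ set $x=\delta\gamma$ and $\nu=\lambda+1$. Reciprocating (\ref{seg1}) gives
\[
\frac{\lambda+\sqrt{\lambda^2+\delta^2\gamma^2}}{\delta\gamma}<\frac{K_{\lambda+1}(\delta\gamma)}{K_{\lambda}(\delta\gamma)}<\frac{\lambda+1+\sqrt{(\lambda+1)^2+\delta^2\gamma^2}}{\delta\gamma},
\]
valid for all $\lambda\in\mathbb{R}$; multiplying by $\delta\beta/\gamma$ and invoking (\ref{expfor}) yields (\ref{mean1}). For the refinements, reciprocating (\ref{seg2}), which needs $\nu>1/2$ (i.e. $\lambda>-1/2$), with equality at $\nu=1/2$ and reversal for $\nu<1/2$, produces after the same rescaling the upper bound (\ref{mean2}), with equality at $\lambda=-1/2$ and reversal for $\lambda<-1/2$; reciprocating (\ref{seg3}), which needs $\nu>3/2$ (i.e. $\lambda>1/2$), with equality at $\nu=3/2$ and reversal for $\nu<3/2$, produces the lower bound (\ref{mean29}), with equality at $\lambda=1/2$ and reversal for $\lambda<1/2$. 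Each passage from the reciprocated Bessel bound to the displayed bound on $\mathbb{E}[X]$ is the same one-line quadratic rearrangement used to go from (\ref{inti}) to the conclusion in the proof of Theorem \ref{thmghd}.

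There is essentially no deep obstacle here; the only points requiring genuine care are the degenerate boundary values of the parameters, just as in Theorem \ref{thmghd}(i). When $\delta\downarrow0$ (which forces $\lambda>0$) or $\gamma\downarrow0$ (which forces $\lambda<0$), the argument $\delta\gamma$ of the Bessel functions tends to $0$ and one must interpret (\ref{expfor}) via the limiting forms (\ref{Ktend0}); in particular $\mathbb{E}[X]\to 2\lambda\beta/\gamma^2$ as $\delta\downarrow0$, which coincides with the lower bound of (\ref{mean1}) when $\lambda>0$ and with (\ref{mean29}) when $\lambda>1/2$, so the strict inequalities are to be read for $\delta>0$, $\gamma>0$, with these particular lower bounds becoming sharp in the limit $\delta\downarrow0$ (a pleasant consistency with the ``best possible'' discussion for the mode). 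Apart from tracking these edge cases and the direction of each inequality after reciprocation, the argument is routine, and I would write it compactly in the manner the authors adopt for Theorem \ref{thmghd}: derive one bound in full and note that the remaining three follow the identical template with the appropriate choice among (\ref{seg1})--(\ref{seg3}).
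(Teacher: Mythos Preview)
Your proposal is correct and matches the paper's own proof essentially verbatim: apply the Bessel ratio bounds (\ref{seg1}), (\ref{seg2}) and (\ref{seg3}) with $\nu=\lambda+1$, $x=\delta\gamma$ to the exact mean formula (\ref{expfor}) and rescale by $\delta\beta/\gamma$. One minor remark: no quadratic rearrangement is needed here as it was in Theorem~\ref{thmghd}(i); after reciprocating, a single multiplication by $\delta\beta/\gamma$ gives the bounds directly.
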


\begin{proof}Inequality (\ref{mean1}) follows from applying inequality (\ref{seg1}) to (\ref{expfor}), whilst inequalities (\ref{mean2}) and (\ref{mean2}) are obtained by instead applying inequalities (\ref{seg2}) and (\ref{mean2}), respectively.
\end{proof}

\begin{corollary}\label{corghd}Let $X\sim GH(\lambda,\alpha,\beta,\delta,0)$.  Fix $\beta>0$.  Then, for $\lambda\in\mathbb{R}$,
\begin{equation}\label{modemean}M<\mathbb{E}[X],
\end{equation}
and we have the following refined inequalities. For $\lambda\in\mathbb{R}$,
\begin{align}&\frac{\beta}{\gamma^2}\Big[1/2+\sqrt{\lambda^2+\delta^2\gamma^2}-\sqrt{(\lambda-1/2)^2+\delta^2\gamma^2}\Big]<\mathbb{E}[X]-M<\nonumber \\
\label{modemean0}&\quad<\frac{\beta}{\gamma^2}\Big[5/2+\sqrt{(\lambda+1)^2+\delta^2\gamma^2}-\sqrt{(\lambda-3/2)^2+\delta^2\gamma^2}\Big].
\end{align}
For a restricted range of parameter values, the upper and lower bounds of (\ref{modemean0}) can be improved as follows. For $\lambda\geq1$,
\begin{equation}\label{modemean1}\mathbb{E}[X]-M>\frac{\beta}{\gamma^2}\Big[3/2+\sqrt{(\lambda-1/2)^2+\delta^2\gamma^2}-\sqrt{(\lambda-1)^2+\delta^2\gamma^2}\Big],
\end{equation}
and
\begin{align*}\label{modemean2}\mathbb{E}[X]-M&<\frac{\beta}{\gamma^2}\Big[2+\sqrt{(\lambda+1/2)^2+\delta^2\gamma^2}-\sqrt{(\lambda-3/2)^2+\delta^2\gamma^2}\Big], \quad \lambda\geq-1/2, \\
\mathbb{E}[X]-M&<\frac{\beta}{\gamma^2}\bigg[3/2+\sqrt{(\lambda+1/2)^2+\delta^2\gamma^2}-\sqrt{(\lambda-1)^2+\gamma^2\bigg(\delta^2+\frac{3-2\lambda}{\alpha^2}\bigg)}\bigg], \quad \lambda\geq2.
\end{align*}
\end{corollary}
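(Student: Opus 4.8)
The plan is to derive every inequality of the corollary by combining, term by term, a one-sided bound for the mode $M$ taken from Theorem \ref{thmghd}(i) with a one-sided bound for $\mathbb{E}[X]$ taken from Proposition \ref{pmean}: since $\mathbb{E}[X]-M$ is at least (any lower bound for $\mathbb{E}[X]$) minus (any upper bound for $M$), and at most (any upper bound for $\mathbb{E}[X]$) minus (any lower bound for $M$), each displayed bound is obtained by a single subtraction. Concretely, I would pair the bounds as follows: for the lower bound in (\ref{modemean0}), the lower bound of (\ref{mean1}) minus the upper bound of (\ref{mode1}); for the upper bound in (\ref{modemean0}), the upper bound of (\ref{mean1}) minus the lower bound of (\ref{mode1}); for (\ref{modemean1}), the lower bound (\ref{mean29}) minus the upper bound (\ref{mode2}); for the bound valid for $\lambda\geq -1/2$, the upper bound (\ref{mean2}) minus the lower bound of (\ref{mode1}); and for the bound valid for $\lambda\geq 2$, the upper bound (\ref{mean2}) minus the lower bound (\ref{mode29}). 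In each case the common factor $\beta/\gamma^2$ pulls out of both ingredients and the remaining bracketed quantities add up verbatim to the expressions in the statement, so the verification is a one-line rearrangement once the correct pair has been selected (with the sign convention recalled before Theorem \ref{thmghd}, namely that all inequalities reverse for $\beta<0$).

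The only assertion not obtained directly this way is $M<\mathbb{E}[X]$ in (\ref{modemean}), and this is the closest thing to a difficulty in the proof: I would deduce it by showing that the lower bound for $\mathbb{E}[X]-M$ already established in (\ref{modemean0}) is strictly positive, that is, that
\begin{equation*}
\sqrt{(\lambda-1/2)^2+\delta^2\gamma^2}-\sqrt{\lambda^2+\delta^2\gamma^2}<\tfrac12 .
\end{equation*}
Rationalising, the left-hand side equals $\frac{1/4-\lambda}{\sqrt{(\lambda-1/2)^2+\delta^2\gamma^2}+\sqrt{\lambda^2+\delta^2\gamma^2}}$; for $\lambda\geq 1/4$ this is non-positive, while for $\lambda<1/4$ the denominator is at least $|\lambda-1/2|+|\lambda|\geq 1/2-2\lambda>0$, which gives the bound $1/2$, with strict inequality since (as $\delta>0$ and $\gamma>0$) the denominator is in fact strictly larger than $|\lambda-1/2|+|\lambda|$.

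Beyond this short estimate the work is purely organisational, and I anticipate no genuine obstacle. First, each combined inequality is valid exactly on the intersection of the validity ranges of its two ingredients: (\ref{modemean1}) needs $\lambda>1/2$ (from (\ref{mean29})) and $\lambda\geq 1$ (from (\ref{mode2})), hence $\lambda\geq 1$; the $\lambda\geq -1/2$ bound needs $\lambda\geq -1/2$ (from (\ref{mean2})); and the $\lambda\geq 2$ bound needs $\lambda\geq 2$ (from (\ref{mode29})). At the endpoints $\lambda=-1/2,1,2$ one ingredient bound degenerates to an equality while the other remains strict, so the combined strict inequality persists. Second, throughout the corollary $\delta>0$ and $\gamma>0$ are understood — otherwise $\lambda\in\mathbb{R}$ is not admissible in the GH parameter domain and $\beta/\gamma^2$ need not be finite — so every bound quoted from Theorem \ref{thmghd} and Proposition \ref{pmean} holds with strict inequality and no separate limiting argument is needed. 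Assembling these observations completes the proof.
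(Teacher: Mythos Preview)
Your proposal is correct and follows essentially the same approach as the paper: each displayed bound is obtained by pairing the appropriate one-sided inequalities from Theorem~\ref{thmghd} and Proposition~\ref{pmean}, and inequality~(\ref{modemean}) is deduced from the positivity of the lower bound in~(\ref{modemean0}). The paper states the latter step as the elementary fact that $a+\sqrt{u^2+b^2}-\sqrt{(u-a)^2+b^2}>0$ for $a>0$ and $b,u\in\mathbb{R}$ (with $b=\delta\gamma\neq 0$ here), whereas you verify the same inequality by rationalising; your explicit treatment of the endpoint cases $\lambda=-1/2,1,2$ and the standing assumption $\delta,\gamma>0$ is a welcome bit of care that the paper leaves implicit.
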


\begin{proof} Combining the two-sided inequalities (\ref{mode1}) and (\ref{mean1}) yields the two-sided inequality (\ref{modemean0}). We obtain inequality (\ref{modemean1}) from inequality (\ref{mode2}) and the lower bound (\ref{mean29}). We obtain the two upper bounds for $\mathbb{E}[X]-M$ by combining the lower bound of (\ref{mode1}) (for $\lambda\geq-1/2$) and inequality (\ref{mode29}) (for $\lambda\geq2$) with the upper bound (\ref{mean2}). Finally, inequality (\ref{modemean}) can be deduced from the lower bound of (\ref{modemean0}) because, for $a>0$ and $b,u\in\mathbb{R}$, the elementary inequality $a+\sqrt{u^2+b^2}-\sqrt{(u-a)^2+b^2}>0$ holds.
\end{proof}

\begin{remark}Several other inequalities for $\mathbb{E}[X]-M$ can be obtained by combining other inequalities from Theorem \ref{thmghd} and Proposition \ref{pmean}.  We omit these for reasons of brevity.
\end{remark}



\subsection{The mode of the variance-gamma distribution}

Recall that $\lim_{\delta\downarrow0}GH(\lambda,\alpha,\beta,\delta,\mu)=\mathrm{VG}_2(\lambda,\alpha,\beta,\mu)$.  Therefore bounds for mode of the $\mathrm{VG}_2(\lambda,\alpha,\beta,0)$ distribution follow immediately from setting $\delta=0$ in the bounds of Theorem \ref{thmghd}.  The following alternative parametrisation of the VG is, however, quite convenient, so we present results for this parametrisation as well.

The VG distribution with parameters $r > 0$, $\theta \in \mathbb{R}$, $\sigma >0$, $\mu \in \mathbb{R}$ has PDF
\begin{equation}\label{vgdef}p_{\mathrm{VG}}(x) = \frac{1}{\sigma\sqrt{\pi} \Gamma(\frac{r}{2})} \mathrm{e}^{\theta (x-\mu)/\sigma^2} \bigg(\frac{|x-\mu|}{2\sqrt{\theta^2 +  \sigma^2}}\bigg)^{\frac{r-1}{2}} K_{\frac{r-1}{2}}\bigg(\frac{\sqrt{\theta^2 + \sigma^2}}{\sigma^2} |x-\mu| \bigg), \quad x\in\mathbb{R},
\end{equation}
If a random variable $X$ has density (\ref{vgdef}) then we write $X\sim \mathrm{VG}(r,\theta,\sigma,\mu)$.  This parametrisation was given in \cite{gaunt vg}, and is related to the one given in (\ref{seven}) by
\begin{equation*}\label{para}r=2\lambda, \quad \theta=\beta/\gamma^2, \quad \sigma=1/\gamma.
\end{equation*}
 It is similar to the parametrisation given by \cite{finlay} and another parametrisation that differs to (\ref{seven}) is given in the book \cite{kkp01}, in which the name generalized Laplace distribution is used.  Similarly, to how we proceeded for the GH distribution, we fix $\theta>0$ in the sequel.

\begin{corollary}\label{thmvg}Let $X\sim\mathrm{VG}(r,\theta,\sigma,0)$ with $\theta>0$ and $\sigma>0$.   If $0<r\leq2$, then $M=0$.  If $r>2$, then $M>0$ and the following two-sided inequality holds:
\begin{equation}\label{mode1vg} \theta(r-3)<M<\theta(r-2),
\end{equation}
and we also have that
\begin{equation}\label{vgmk}2\theta<\mathbb{E}[X]-M<3\theta.
\end{equation}
For $r>4$,
\begin{equation}\label{r4r4}M>\frac{\theta}{2}\bigg[r-2+\sqrt{\frac{\theta^2(r-2)^2+\sigma^2(r-4)^2}{\theta^2+\sigma^2}}\bigg],
\end{equation}
which improves on the lower bound in (\ref{mode1vg}) in the range of validity $r>4$.  We have equality in (\ref{r4r4}) if $r=4$ and the inequality is reversed if $2<r<4$.  If $3<r<4$ then the reversed inequality (\ref{r4r4}) improves on the upper bound in (\ref{mode1vg}), but the reverse is true for $2<r<3$.

The lower and upper bounds in (\ref{mode1vg}) are best possible amongst all bounds of the form $\theta(r-a)$, where $a\in\mathbb{R}$ does not involve the model parameters.  Also, the lower and upper bounds in (\ref{vgmk}) are best possible amongst all bounds of the form $b\theta$, where $b\in\mathbb{R}$ does not involve the model parameters.

\end{corollary}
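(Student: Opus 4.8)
The plan is to obtain every part of the corollary from Theorem \ref{thmghd} (and the mean formula (\ref{expfor})) by specialising to $\delta=0$ and re-coordinatising via $r=2\lambda$, $\theta=\beta/\gamma^2$, $\sigma=1/\gamma$; equivalently $\gamma=1/\sigma$, $\beta=\theta/\sigma^2$ and $\alpha=\sqrt{\theta^2+\sigma^2}/\sigma^2$, so that $\beta/\gamma^2=\theta$ and $\gamma^2/\alpha^2=\sigma^2/(\theta^2+\sigma^2)$. Under these substitutions the assertion $M=0$ for $0<r\le2$ is exactly the $\delta=0$, $0<\lambda\le1$ statement recorded in part (ii) of Theorem \ref{thmghd}. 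For the upper bound in (\ref{mode1vg}) I would invoke (\ref{mode2}) with $\delta=0$ and $\lambda>1$, where $\sqrt{(\lambda-1)^2+\delta^2\gamma^2}=\lambda-1$, so the bound collapses to $M<\frac{\beta}{\gamma^2}\cdot2(\lambda-1)=\theta(r-2)$; for the lower bound I would use the lower bound of (\ref{mode1}) with $\delta=0$ and $\lambda>3/2$, which similarly collapses to $M>\frac{\beta}{\gamma^2}\cdot2(\lambda-3/2)=\theta(r-3)$, this being vacuous for $2<r\le3$ once positivity is known since then $\theta(r-3)\le0<M$.

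It remains to argue $M>0$ when $r>2$. At $\delta=0$, equation (\ref{xstar}) restricted to $x>0$ reads $K_{\lambda-3/2}(\alpha x)/K_{\lambda-1/2}(\alpha x)=\beta/\alpha$, and here $\beta/\alpha=\theta/\sqrt{\theta^2+\sigma^2}\in(0,1)$. Using the parity relation (\ref{parity}) and the small-argument asymptotics (\ref{Ktend0}), the left-hand side tends to $0$ as $x\downarrow0$ whenever $\lambda>1$, while by (\ref{Ktendinfinity}) it tends to $1$ as $x\to\infty$; the intermediate value theorem then gives a positive solution, which by the unimodality of the VG distribution is the mode, so $M>0$. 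For the mean--mode gap (\ref{vgmk}) I would record that $\mathbb{E}[X]=r\theta$ for $X\sim\mathrm{VG}(r,\theta,\sigma,0)$, which follows from (\ref{expfor}) on letting $\delta\downarrow0$ and applying (\ref{Ktend0}) to $K_{\lambda+1}(\delta\gamma)/K_\lambda(\delta\gamma)$; combining this with (\ref{mode1vg}) gives $\mathbb{E}[X]-M=r\theta-M\in(r\theta-\theta(r-2),\,r\theta-\theta(r-3))=(2\theta,3\theta)$.

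Next, (\ref{r4r4}) is (\ref{mode29}) with $\delta=0$ and $\lambda=r/2>2$: using $\gamma^2/\alpha^2=\sigma^2/(\theta^2+\sigma^2)$ and the identity $(r-2)^2+4(3-r)=(r-4)^2$ one rewrites $(\lambda-1)^2+\frac{\gamma^2}{\alpha^2}(3-2\lambda)=\frac14\cdot\frac{\theta^2(r-2)^2+\sigma^2(r-4)^2}{\theta^2+\sigma^2}$, which turns (\ref{mode29}) into (\ref{r4r4}); the equality at $r=4$ and the reversal for $2<r<4$ are the images of the corresponding statements for (\ref{mode29}) at and below $\lambda=2$. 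The claimed comparisons with (\ref{mode1vg}) are inherited from the facts, already noted after (\ref{seg3}), that (\ref{seg3}) beats the upper bound of (\ref{seg1}) on its whole range and that the reversed (\ref{seg3}) beats (\ref{seg2}) for $1<\nu<3/2$ but loses to it for $1/2<\nu<1$; writing $\nu=\lambda-1/2$, these correspond to $r>4$, $3<r<4$ and $2<r<3$. For the optimality statements, the $\delta=0$ specialisation of the best-possible part of Theorem \ref{thmghd}(ii) shows that $\theta(r-3)$ (the value $a=3/2$) and $\theta(r-2)$ (the value $a=1$) cannot be improved within the family $\theta(r-a)$; and since $M\sim2\theta(\lambda-1)=\theta(r-2)$ as $\sigma\downarrow0$ by (\ref{asym5}) and $M\sim\theta(2\lambda-3)=\theta(r-3)$ as $\sigma\to\infty$ by the $\lambda>3/2$ case of (\ref{asym4}), we get $\mathbb{E}[X]-M\to2\theta$ and $\mathbb{E}[X]-M\to3\theta$ in these two limits, so $2\theta$ and $3\theta$ are optimal within the family $b\theta$.

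The only substantial work here is bookkeeping: tracking which inequalities of Theorem \ref{thmghd} survive the limit $\delta\downarrow0$ and in which $\lambda$-range, keeping the equality and reversed-inequality boundary cases straight, and carrying out the short algebraic simplification behind (\ref{r4r4}). The one mildly delicate point, as opposed to pure substitution, is the positivity claim for $r>2$, i.e.\ ruling out a mode at the origin; for $1<\lambda\le3/2$ the lower bound of (\ref{mode1}) is unavailable, so one must fall back on the continuity argument above.
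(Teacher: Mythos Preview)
Your proposal is correct and follows essentially the same approach as the paper: specialise Theorem~\ref{thmghd} to $\delta=0$ via the reparametrisation $r=2\lambda$, $\theta=\beta/\gamma^2$, $\sigma=1/\gamma$, use the exact mean $\mathbb{E}[X]=r\theta$ for (\ref{vgmk}), and derive (\ref{r4r4}) from (\ref{mode29}) with the same algebraic simplification. The only notable difference is in establishing $M>0$ for $r>2$: you argue via the small- and large-argument asymptotics of $K_{\lambda-3/2}(\alpha x)/K_{\lambda-1/2}(\alpha x)$ and the intermediate value theorem, whereas the paper invokes Corollary~2.3 of \cite{gaunt ineq1} to obtain a unique positive solution of (\ref{xstar3}); both routes are valid and of comparable length.
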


\begin{proof} That $M=0$ for $0\leq r\leq2$ and inequality (\ref{mode1vg}) follow from Theorem \ref{thmghd} (with $\delta=0$) after making the change of parameters $r=2\lambda$, $\theta=\beta/\gamma^2$, $\sigma=1/\gamma$.  It should be noted that the lower bound in (\ref{mode1vg}) only follows from inequality (\ref{mode1}) of Theorem \ref{thmghd} for $r>3$, but it is also valid for $r>2$ because $M>0$ for $r>2$.  The latter assertion follows because the mode of the $\mathrm{VG}(r,\theta,\sigma,0)$ distribution is given by the unique non-negative solution to (\ref{xstar3}) (with $\lambda=2r$).  By Corollary 2.3 of \cite{gaunt ineq1} there is exactly one solution to $C K_{\nu-1}(y)/K_\nu(y)=1$, $C>1$, $\nu>1/2$ in the region $y>0$ and therefore we conclude that there is exactly one strictly positive solution to (\ref{xstar3}) for $r>2$, and so $M>0$ for $r>2$.

 The mean of the VG distribution takes a simple form: $\mathbb{E}[X]=r\theta$ for $X\sim\mathrm{VG}(r,\theta,\sigma,0)$  \cite{eberlein}.  Combining this formula with inequality (\ref{mode1vg}) yields (\ref{vgmk}).  Inequality (\ref{r4r4}) follows from inequality (\ref{mode29}) with $\delta=0$ and a calculation to verify that
\begin{align*}\frac{\beta}{\gamma^2}\bigg[\lambda-1+\sqrt{(\lambda-1)^2+\frac{\gamma^2(3-2\lambda)}{\alpha^2}}\bigg]=\frac{\theta}{2}\bigg[r-2+\sqrt{\frac{\theta^2(r-2)^2+\sigma^2(r-4)^2}{\theta^2+\sigma^2}}\bigg].
\end{align*}
Elementary calculations can be used to confirm the assertions regarding the relations between inequality (\ref{r4r4}) and the bounds in (\ref{mode1vg}).  The assertion that the lower and upper bounds (\ref{mode1vg}) are best possible amongst all bounds of the form $\theta(r-a)$, where $a\in\mathbb{R}$ does not involve the model parameters, follows from Theorem \ref{thmghd} (with $\delta=0$).  The assertion regarding the optimality of the double inequality (\ref{vgmk}) then follows because an exact formula was used for the mean $\mathbb{E}[X]$.
\end{proof}

\begin{remark}Let $B_{r,\theta,\sigma}$ denote the bound for $M$ in (\ref{r4r4}).  Then, for fixed $r>4$ and $\theta>0$, the function $\sigma\mapsto B_{r,\theta,\sigma}$ is a strictly decreasing on $(0,\infty)$ and
\[\lim_{\sigma\downarrow0}B_{r,\theta,\sigma}=\theta(r-2), \quad \lim_{\sigma\rightarrow\infty}B_{r,\theta,\sigma}=\theta(r-3).\]
\end{remark}






Let $(X, Y)$ be a bivariate normal random vector with a zero-mean vector, variances $(\sigma_X^2, \sigma_Y^2)$, and correlation coefficient $\rho$. Denote the product $Z=XY$ and let $Z_1,Z_2,\ldots,Z_n$ be independent copies of $Z$. Then the exact distribution of the sample mean $\overline{Z}=\frac{1}{n}(Z_1+Z_2\cdots+Z_n)$ was obtained by \cite{np16}:
\begin{equation*}p_{\overline{Z}}(x)=\frac{n^{(n+1)/2}2^{(1-n)/2}|x|^{(n-1)/2}}{(\sigma_X\sigma_Y)^{(n+1)/2}\sqrt{\pi(1-\rho^2)}\Gamma\big(\frac{n}{2}\big)}\exp\bigg(\frac{\rho n x}{\sigma_X\sigma_Y(1-\rho^2)} \bigg)K_{\frac{n-1}{2}}\bigg(\frac{n |x|}{\sigma_X\sigma_Y(1-\rho^2)}\bigg),
\end{equation*}
$x\in\mathbb{R}$, and it was noted by \cite{gaunt prod} that $\overline{Z}\sim\mathrm{VG}(n,\frac{1}{n}\rho\sigma_X\sigma_Y,\frac{1}{n}\sigma_X\sigma_Y\sqrt{1-\rho^2},0)$.  The following corollary is therefore immediate from Corollary \ref{thmvg}.

\begin{corollary}Let $M$ denote the mode of $\overline{Z}$, as defined above. If $n=1,2$, then $M=0$.  If $n\geq3$,
\[\rho\sigma_X\sigma_Y(1-3/n)<M<\rho\sigma_X\sigma_Y(1-2/n).\]
Also, for $n\geq4$,
\[M\geq\frac{\rho\sigma_X\sigma_Y}{2}\bigg[1-\frac{2}{n}+\sqrt{\rho^2\Big(1-\frac{2}{n}\Big)^2+(1-\rho^2)\Big(1-\frac{4}{n}\Big)^2}\bigg],\]
with equality if and only if $n=4$.
\end{corollary}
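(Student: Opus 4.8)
The plan is to read the result directly off Corollary~\ref{thmvg}, applied to the variance-gamma representation of $\overline{Z}$ recalled immediately above, namely $\overline{Z}\sim\mathrm{VG}(n,\frac{1}{n}\rho\sigma_X\sigma_Y,\frac{1}{n}\sigma_X\sigma_Y\sqrt{1-\rho^2},0)$. Writing $r=n$, $\theta=\frac{1}{n}\rho\sigma_X\sigma_Y$ and $\sigma=\frac{1}{n}\sigma_X\sigma_Y\sqrt{1-\rho^2}$, the condition $0<r\leq2$ of Corollary~\ref{thmvg} translates to $n\in\{1,2\}$, the condition $r>2$ to $n\geq3$, and the non-strict inequality (\ref{r4r4}), valid for $r\geq4$ with equality at $r=4$, to $n\geq4$ with equality at $n=4$. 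Throughout we take $\rho>0$, so that $\theta>0$ as required by the sign convention of Corollary~\ref{thmvg}; the case $\rho<0$ follows from the reflection $\overline{Z}\mapsto-\overline{Z}$ and $\rho=0$ gives a distribution symmetric about $0$, for which $M=0$.

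First I would dispose of the case $n\in\{1,2\}$: here $0<r=n\leq2$, so Corollary~\ref{thmvg} gives $M=0$ at once. Next, for $n\geq3$ we have $r>2$, and the two-sided inequality (\ref{mode1vg}), $\theta(r-3)<M<\theta(r-2)$, becomes $\frac{1}{n}\rho\sigma_X\sigma_Y(n-3)<M<\frac{1}{n}\rho\sigma_X\sigma_Y(n-2)$, which is precisely $\rho\sigma_X\sigma_Y(1-3/n)<M<\rho\sigma_X\sigma_Y(1-2/n)$.

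Finally, for $n\geq4$ I would substitute the parameter values into inequality (\ref{r4r4}). The only computation of note is the simplification of the term under the square root: since $\theta^2+\sigma^2=\frac{1}{n^2}\sigma_X^2\sigma_Y^2(\rho^2+(1-\rho^2))=\frac{1}{n^2}\sigma_X^2\sigma_Y^2$, one obtains $\frac{\theta^2(r-2)^2+\sigma^2(r-4)^2}{\theta^2+\sigma^2}=\rho^2(n-2)^2+(1-\rho^2)(n-4)^2=n^2\big[\rho^2(1-2/n)^2+(1-\rho^2)(1-4/n)^2\big]$. Combining this with $\frac{\theta}{2}(r-2)=\frac{\rho\sigma_X\sigma_Y}{2}(1-2/n)$ and the prefactor $\frac{\theta}{2}=\frac{\rho\sigma_X\sigma_Y}{2n}$ multiplying the square root yields the claimed lower bound, and the equality assertion at $n=4$ is inherited from the equality assertion at $r=4$ in Corollary~\ref{thmvg}. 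There is no genuine obstacle: the entire content is the parameter identification, which was supplied in the text preceding the statement, together with the elementary observation that $\theta^2+\sigma^2$ collapses to $\frac{1}{n^2}\sigma_X^2\sigma_Y^2$.
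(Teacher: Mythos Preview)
Your proposal is correct and matches the paper's approach exactly: the paper states only that the corollary is ``immediate from Corollary~\ref{thmvg}'' via the identification $\overline{Z}\sim\mathrm{VG}(n,\frac{1}{n}\rho\sigma_X\sigma_Y,\frac{1}{n}\sigma_X\sigma_Y\sqrt{1-\rho^2},0)$, and you have simply written out the substitution in full, including the key simplification $\theta^2+\sigma^2=\frac{1}{n^2}\sigma_X^2\sigma_Y^2$. Your remark on the sign of $\rho$ is a sensible addition, since Corollary~\ref{thmvg} is stated for $\theta>0$.
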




\subsection{The mode of the McKay Type I distribution}

\begin{theorem}\label{thmmckay} The McKay Type I distribution with density (\ref{seven7}) is unimodal. If $-1/2<m\leq0$, then $M=0$.  If $m>0$, then
\begin{equation}\label{mode1mckay} \frac{(2m-1)bc}{c^2-1}<M<\frac{bc}{c^2-1}\Big[m-1/2+\sqrt{(m+1/2)^2-2m/c^2}\Big]<\frac{2mbc}{c^2-1},
\end{equation}
and we also have that
\begin{equation}\label{exmac}\frac{bc}{c^2-1}<\frac{bc}{c^2-1}\Big[3/2+m-\sqrt{(m+1/2)^2-2m/c^2}\Big]<\mathbb{E}[X]-M<\frac{2bc}{c^2-1}.
\end{equation}
We also have the following alternative lower bound for $M$.  For $m>0$,
\begin{equation}\label{mode5mckay}M>\frac{bc}{c^2-1}\Big[m-1+\sqrt{(m-1)^2+4(c^2-1)m/c^2}\Big].
\end{equation}

The lower and upper bounds in (\ref{mode1mckay}) are best possible amongst all bounds of the form $(2m-\alpha)bc/(c^2-1)$, where $\alpha\in\mathbb{R}$ does not involve the model parameters.  Also, the lower and upper bounds in (\ref{exmac}) are best possible amongst all bounds of the form $\beta bc/(c^2-1)$, where $\beta\in\mathbb{R}$ does not involve the model parameters.
\end{theorem}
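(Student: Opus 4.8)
The proof will follow the template of the proof of Theorem \ref{thmghd}: differentiate the density, identify the mode with the solution of (\ref{ystar}), and then substitute the Bessel-ratio bounds (\ref{sqrtbb}) and (\ref{segi2}). Writing $p_{\text{McKay I}}(x)=Kx^m\mathrm{e}^{-cx/b}I_m(x/b)$ with $K>0$ the normalising constant, the differentiation formula (\ref{ddbi}) gives $(\log p_{\text{McKay I}})'(x)=\tfrac1b\big(I_{m-1}(x/b)/I_m(x/b)-c\big)$, so the stationary points of the density are exactly the positive roots of (\ref{ystar}). For $-1/2<m<0$ the product $x^mI_m(x/b)$ is asymptotic to a constant multiple of $x^{2m}$ with $2m<0$ as $x\downarrow0$, so the density is unbounded near the origin and $M=0$; for $m=0$ one has $(\log p_{\text{McKay I}})'(x)=\tfrac1b\big(I_1(x/b)/I_0(x/b)-c\big)<0$ because $I_1/I_0<1<c$, so the density is strictly decreasing and $M=0$. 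For $m>0$ the density vanishes at $0$ and at $\infty$, so a mode exists; unimodality (hence uniqueness of $M$) follows once one checks that $z\mapsto I_{m-1}(z)/I_m(z)$ crosses the level $c>1$ exactly once and from above, which can be extracted from the bounds in (\ref{sqrtbb}) (this parallels the use of the $K$-ratio in the proof of Corollary \ref{thmvg}).

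For $m>0$ write $z^\ast=M/b$ for the unique positive root of $I_m(z)/I_{m-1}(z)=1/c$. Inserting the lower bound of (\ref{sqrtbb}) with $\nu=m$ into this equation and squaring (the relevant side is automatically nonnegative) gives a quadratic inequality in $z^\ast$ whose solution is precisely the middle expression in (\ref{mode1mckay}); inserting the upper bound of (\ref{sqrtbb}) with $\nu=m\ (\geq1/2)$ gives, in the same way, $z^\ast>c(2m-1)/(c^2-1)$, which is the lower bound in (\ref{mode1mckay}) and is valid for all $m>0$ since for $0<m\leq1/2$ its right-hand side is $\leq0<M$. The rightmost inequality in (\ref{mode1mckay}) is the elementary estimate $\sqrt{(m+1/2)^2-2m/c^2}<m+1/2$, valid because $m>0$. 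The alternative lower bound (\ref{mode5mckay}) is the $I$-analogue of (\ref{mode29})/(\ref{r4r4}) and is obtained identically, this time from (\ref{segi2}) with $\nu=m$.

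For (\ref{exmac}) I will use $\mathbb{E}[X]=(2m+1)bc/(c^2-1)$; this follows from the representation $X\stackrel{d}{=}G_1+G_2$ with $G_1,G_2$ independent, $G_1\sim\mathrm{Gamma}(m+\tfrac12,(c+1)/b)$ and $G_2\sim\mathrm{Gamma}(m+\tfrac12,(c-1)/b)$ (rate parametrisation; this is the representation alluded to in the introduction), or directly by integrating the series for $I_m$ against $x^{m+1}\mathrm{e}^{-cx/b}$. Subtracting the bounds of (\ref{mode1mckay}) from $\mathbb{E}[X]$ yields (\ref{exmac}), its leftmost inequality again reducing to $\sqrt{(m+1/2)^2-2m/c^2}<m+1/2$. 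For the optimality statements I analyse $z^\ast$ in the two extreme regimes, using $I_m(z)/I_{m-1}(z)\sim z/(2m)$ as $z\downarrow0$ and $I_m(z)/I_{m-1}(z)=1-(2m-1)/(2z)+O(z^{-2})$ as $z\to\infty$. With $\phi:=M(c^2-1)/(bc)=z^\ast(c^2-1)/c$ one gets $\phi\to2m$ as $c\to\infty$ and, fixing any $m>1/2$, $\phi\to2m-1$ as $c\downarrow1$, whereas (\ref{mode1mckay}) already forces $2m-1<\phi<2m$ throughout; hence $a=1$ and $a=0$ are the largest admissible constants for a lower, respectively an upper, bound of the form $(2m-a)bc/(c^2-1)$ holding for all $m>0$. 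The same two limits give $(2m+1)-\phi\to1$ and $\to2$ while $1<(2m+1)-\phi<2$ always, which gives the optimality of the constants $1$ and $2$ in (\ref{exmac}).

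I expect the main obstacle to be the unimodality claim in the range $0<m<1/2$: there $z\mapsto I_m(z)/I_{m-1}(z)$ need not be monotone (by the expansion above it approaches its limit $1$ from above), so one cannot simply quote monotonicity and must argue that $(\log p_{\text{McKay I}})'$ still has exactly one sign change --- most cleanly by controlling the single sign change of $z\mapsto I_{m-1}(z)/I_m(z)-c$ via (\ref{sqrtbb}), or by invoking an $I$-analogue of the uniqueness result used for the $K$-ratio in the proof of Corollary \ref{thmvg}. Everything else is bookkeeping: matching each bound to the admissible range of $\nu$ in (\ref{sqrtbb})--(\ref{segi2}), checking that the squaring steps are legitimate, and confirming the mean formula. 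As in Theorem \ref{thmghd}, the only genuinely analytic input is the small- and large-argument behaviour of the Bessel ratio, which is classical.
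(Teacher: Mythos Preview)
Your derivation of the bounds (\ref{mode1mckay}), (\ref{exmac}), (\ref{mode5mckay}) and the asymptotic optimality analysis as $c\to\infty$ and $c\downarrow1$ are essentially identical to the paper's proof, as is the treatment of the cases $-1/2<m<0$ and $m=0$.

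The one genuine difference is the unimodality argument. The paper does not attempt to control sign changes of $z\mapsto I_{m-1}(z)/I_m(z)-c$ directly; instead it observes that the McKay Type~I distribution is a sum of two independent gamma random variables (precisely the representation you already invoke to compute $\mathbb{E}[X]$), hence self-decomposable, hence unimodal by Yamazato's theorem \cite{y78}. This sidesteps entirely the obstacle you correctly identify for $0<m<1/2$: there the ratio $I_m(z)/I_{m-1}(z)$ overshoots~$1$ and approaches it from above, so monotonicity fails and the bounds in (\ref{sqrtbb}) alone do not immediately yield a single crossing of level $1/c$. Your route may well be salvageable with additional work on the Bessel ratio, but the structural self-decomposability argument is both shorter and avoids the case analysis altogether---and you already have the key ingredient in hand.
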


\begin{proof}The McKay Type I distribution can be expressed as a sum of two independent gamma random variables \cite[Theorem 3]{ha04}, 
 and is therefore self-decomposable.  Since self-decomposable distributions are unimodal \cite{y78}, it follows that the McKay Type I distribution is unimodal.


If $m=0$, the McKay Type I density is $p(x)=N_{b,c}\mathrm{e}^{-cx/b}I_0(x/b)$, $x>0$, where $N_{b,c}$ is the normalising constant. By the differentiation formula (\ref{ddbi0}) and the inequality $I_1(x)<I_0(x)$ \cite{soni}, we have, for all $x>0$,
\begin{equation*}p'(x)=\frac{N_{b,c}}{b}\mathrm{e}^{-cx/b}\Big(I_1\Big(\frac{x}{b}\Big)-cI_0\Big(\frac{x}{b}\Big)\Big)<\frac{(1-c)N_{b,c}}{b}\mathrm{e}^{-cx/b}I_0\Big(\frac{x}{b}\Big)<0,
\end{equation*}
since $c>1$.  Thus, $M=0$ when $m=0$.  For $-1/2<m<0$, we see from the limiting form (\ref{Itend0}) that the McKay Type I density blows up as $x\downarrow0$.    As $p(x)$ is analytic in $(0,\infty)$ this is the only singularity and so when $-1/2<m<0$ the mode is 0.

Let $m>0$. Recall that the mode satisfies equation (\ref{ystar}). By applying the lower bound of (\ref{sqrtbb}) to (\ref{ystar}) we have that
\begin{equation}\label{inti7}\frac{x/b}{m-1/2+\sqrt{(m+1/2)^2+(x/b)^2}}<\frac{1}{c}.
\end{equation}
On solving inequality (\ref{inti7}) for $x$ we obtain the smallest upper bound in (\ref{mode1mckay}) for $M$, and simple manipulations show that, for $m>0$ and $c>1$,
\[m-1/2+\sqrt{(m+1/2)^2-2m/c^2}<2m.\]
For $m\geq1/2$ we obtain the lower bound of (\ref{mode1mckay}) similarly by instead applying the upper bound of (\ref{sqrtbb}) to (\ref{ystar}) (of course the lower bound of (\ref{mode1mckay}) is also valid for $0<m<1/2$).  As $\mathbb{E}[X]=(2m+1)bc/(c^2-1)$ (see \cite{mckay}), the double inequality (\ref{exmac}) then follows from (\ref{mode1mckay}).  For $m>0$, we obtain inequality (\ref{mode5mckay}) by applying inequality (\ref{segi2}) to (\ref{ystar}) and then solving the resulting inequality to obtain the lower bound for $M$.

With $\phi=bc/(c^2-1)$ equation (\ref{xstar3}) reads
\begin{equation}\label{xstar5}\frac{I_m\big(\frac{cx}{(c^2-1)\phi}\big)}{I_{m-1}\big(\frac{cx}{(c^2-1)\phi}\big)}=\frac{1}{c}.
\end{equation}
Let $m>0$ and fix $\phi$.  Then applying (\ref{Itend0}) to (\ref{xstar5}) gives that, as $c\rightarrow\infty$,
\begin{align*}\frac{1}{2m}\frac{x}{c\phi} \sim \frac{1}{c},
\end{align*}
where we used that $c/(c^2-1)\sim 1/c$, as $c\rightarrow\infty$. Rearranging gives that $x\sim 2m\phi$, as $c\rightarrow\infty$, as required. Now let $m>1/2$. Applying (\ref{roots}) to (\ref{xstar5}) and using that $c^2-1\sim 2(c-1)$, as $c\downarrow1$, gives that, as $c\downarrow1$,
\begin{align*} 0&=c\cdot\frac{\displaystyle 1-\frac{4m^2-1}{4}\cdot\frac{(c-1)\phi}{cx}}{\displaystyle1-\frac{4m^2-1}{4}\cdot\frac{(c-1)\phi}{cx}}-1+O((c-1)^2) \\
&=c-1-\frac{(2m-1)(c-1)\phi}{x}+O((c-1)^2) \\
&=(c-1)\bigg(1-\frac{(2m-1)\phi}{x}\bigg)+O((c-1)^2),
\end{align*}
from which we deduce that  $M= (2m-1)\phi+O(c-1)$, as $c\downarrow1$, as required.
\end{proof}

\begin{remark}Inequality (\ref{mode5mckay}) improves on the lower bound of (\ref{mode1mckay}) if either
\[\quad 1<c<\sqrt{2}, \:0<m<\frac{c^2}{2(2-c^2)},\quad \text{or} \quad c\geq\sqrt{2},\:m>0  .\]
Letting $B_{m,b,c}$ denote the bound in inequality (\ref{mode5mckay}) we have that $B_{m,b,c}\sim 2mb/c$, $c\rightarrow\infty$, which is the same asymptotic behaviour as the upper bounds in (\ref{mode1mckay}).
\end{remark}


\subsection{Further results and comments}\label{sec2.4}

There exists a substantial literature on inequalities for ratios of modified Bessel functions (see \cite{gaunt struve} and references therein), which could in principle be applied to  (\ref{xstar}) and (\ref{ystar}) to obtain alternative bounds for the mode of the GH, VG and McKay Type I distributions (see inequality (\ref{tanhin}) for an example)
that may yield improvements at least in some parameter regimes.
However, whilst the literature contains inequalities for ratios of modified Bessel functions that are more accurate than the inequalities (\ref{seg1}), (\ref{seg2}), (\ref{seg3}), (\ref{sqrtbb}) and (\ref{segi2}) that we have used in this paper (see, for example, the iterative schemes of \cite{rs16,segura} that can be used to obtain more accurate bounds by using the aforementioned bounds as initial bounds in the iterative procedure), when applied to (\ref{xstar}) or (\ref{ystar}) these more accurate bounds tend to lead to inequalities for the mode that are intractable to solve.




 It is in fact a rather fortunate algebraic accident that the inequalities (\ref{seg1}), (\ref{seg2}) and (\ref{seg3}) for $K_{\nu-1}(x)/K_\nu(x)$ yield, through (\ref{xstar}), a tractable inequality for the mode of the $GH(\lambda,\alpha,\beta,\delta,0)$ distribution.  As an illustration, consider the following inequality (see Section 3.2 of \cite{segura})
\[\frac{K_{\nu-1}(x)}{K_{\nu}(x)}<\frac{1/2-\nu+\sqrt{(\nu+1/2)^2+x^2}}{x}, \quad x>0,\:\nu>-1/2,\]
which improves on inequality (\ref{seg2}) for $-1/2<\nu<0$.  Applying this inequality to (\ref{xstar}) yields the inequality
\[\frac{x}{\delta^2+x^2}\Big[1-\lambda+\sqrt{\lambda^2+\alpha^2(\delta^2+x^2)}\Big]<\beta,\]
which can be solved exactly for $x$, but the resulting bound for the mode is (a solution to a quartic equation) is too complicated to be of practical use.  In fact, as mentioned above, most other bounds in the literature for $K_{\nu-1}(x)/K_\nu(x)$ lead to inequalities for $x$ that cannot be solved analytically.


We end this section by recording the special cases in which a closed-form formula is available for the mode of the GH, VG and McKay Type I distributions.

Let $X\sim GH(\lambda,\alpha,\beta,\delta,0)$ and let $M$ denote its mode.  We have already mentioned that that $M=0$ if $\beta=0$.  Let us now note some cases in which $M$ can be calculated explicitly for general $\beta$ (subject to the necessary condition $|\beta|<\alpha$).  Recall that $M$ is the unique solution to (\ref{xstar}). For $n\in\mathbb{Z}$, $K_{n-1/2}(x)/K_{n+1/2}(x)$ is rational function (see formula (\ref{special}), and apply (\ref{parity}) if $n\leq0$), resulting in a simplification of equation (\ref{xstar}).  If $\lambda=1$ or $\lambda=2$ we can apply the formulas in (\ref{special2}) and $K_{-1/2}(x)=\sqrt{\frac{\pi}{2 x}}\mathrm{e}^{-x}$ to obtain simple equations for the mode.  For example, for $\lambda=2$, (\ref{xstar}) reads
\[\frac{x}{1+\alpha\sqrt{\delta^2+x^2}}=\frac{\beta}{\alpha^2},\]
which on rearranging suitably and then squaring both sides leads to a quadratic equation for $M$ (and simple graphical considerations allow us to identify the relevant solution).  Carrying out these calculations leads to the following compact formulas:
\[M=\frac{\beta\delta}{\gamma}, \:(\lambda=1), \quad M=\frac{\beta}{\gamma^2}\Big[1+\sqrt{\beta^2/\alpha^2+\delta^2\gamma^2}\Big], \:(\lambda=2).\]
The case $\lambda=1$ corresponds to the hyperbolic distribution, and the formula is well-known.  For $\lambda=0$ and $\lambda=3$, repeating the same procedure leads to a quartic equation for $M$, the solutions to which are too complicated to be worth repeating here.

Now let $X\sim\mathrm{VG}_2(\lambda,\alpha,\beta,0)$. For $\delta=0$ equation (\ref{xstar}) is simplified and we obtain
\begin{eqnarray}\label{86}M&=&\frac{\beta}{\alpha(\alpha-\beta)}, \quad \lambda=2, \\
\label{87}M&=& \frac{3\beta-\alpha+\sqrt{\alpha^2+6\alpha\beta-3\beta^2}}{2\alpha(\alpha-\beta)}, \quad \lambda=3,
\end{eqnarray}
in addition to $M=0$ for $\lambda\leq1$ (see Corollary \ref{thmvg}).  For $\lambda=4$ and $\lambda=5$ we can again perform simple manipulations to obtain cubic and quartic equations that $M$ satisfies, but, like in the GH case, the formulas are too complicated to present here.  For the $\mathrm{VG}(r,\theta,\sigma,0)$ parametrisation, the formulas (\ref{86}) and (\ref{87}) read
\begin{eqnarray*}M&=&\theta\bigg(1+\frac{1}{\sqrt{1+\kappa}}\bigg), \quad r=4, \\
M &=&\frac{\theta}{2}\bigg(1+\frac{1}{\sqrt{1+\kappa}}\bigg)\bigg(3-\sqrt{1+\kappa}+\sqrt{6\sqrt{1+\kappa}+\kappa-2}\bigg), \quad r=6,
\end{eqnarray*}
where $\kappa=\sigma^2/\theta^2$.

For the McKay Type I distribution, (\ref{ystar}) only simplifies to a tractable equation in the case $m=1/2$ (in which case we use the formulas in (\ref{casei})), and in that case we obtain
\begin{equation*}M=b\tanh^{-1}(1/c).
\end{equation*}
In fact, we have the inequality
\begin{equation}\label{tanhin}M>b\tanh^{-1}(1/c), \quad m>1/2,
\end{equation}
 which follows from applying the inequality $I_\nu(x)/I_{\nu-1}(x)<\mathrm{tanh}(x)$, $x>0$, $\nu>1/2$, \cite{ifantis} to (\ref{ystar}).

\section{Conjectured inequalities and monotonicity results for the median}\label{sec3}
Let $X$ be a random variable with cumulative distribution function $F_X(x)=\mathbb{P}(X\leq x)$, $x\in\mathbb{R}$.  Then the median of $X$ is defined as $\mathrm{Med}(X):=\inf\{x\in\mathbb{R}\,|\,F_X(x)\geq1/2\}$.  In general, it is not possible to obtain an exact formula for the median of the GH, VG or McKay Type I distributions.  However, in addition to the trivial $\beta=0$ case for the GH and VG distributions, the median of the $\mathrm{VG}(2,\theta,\sigma,0)$ distribution (which is an asymmetric Laplace distribution) can be worked out exactly, and this is instructive for what follows. Let $X\sim \mathrm{VG}(2,\theta,\sigma,0)$, which has PDF
\begin{equation*}p_X(x)=\frac{1}{2\sqrt{\theta^2+\sigma^2}}\mathrm{e}^{\theta x/\sigma^2 -\sqrt{\theta^2+\sigma^2} |x|/\sigma^2}, \quad x\in\mathbb{R}.
\end{equation*}
The median of the asymmetric Laplace distribution is well-known \cite{kp01} and given by
\begin{equation}\label{medform}\mathrm{Med}(X)=\big(\theta+\sqrt{\theta^2+\sigma^2}\big)\log\bigg(1+\frac{\theta}{\sqrt{\theta^2+\sigma^2}}\bigg).
\end{equation}
Fix $\theta\in\mathbb{R}$.  Then elementary asymptotic manipulations show that
\begin{align}\label{propmed}\mathrm{Med}(X)\sim 2\theta\log 2, \: \sigma\downarrow0, \quad \text{and} \quad \mathrm{Med}(X)\sim \theta, \: \sigma\rightarrow\infty.
\end{align}

We have been unable to derive bounds for the median of the GH, VG and McKay Type I distributions.  We have, however, been able to make conjectures
concerning bounds for the median. Our conjectures for the McKay Type I and VG distributions are motivated through their representations in terms of sums
and differences of two independent gamma random variables. In this section, we shall describe how we arrived at our conjectures,
and in doing so provide a possible approach to verifying our conjectured bounds for the medians of these distributions. Based on a conjectured lower
bound for the VG distribution, we are able to conjecture a lower bound for the GH distribution, although as the GH distribution with $\delta\not=0$ does
not have a representation as a sum or difference of independent gamma random variables, we have not been able to provide an analogous possible approach to
verifying the conjecture.  We end the section with some numerical results to support our conjectures.

We begin by clarifying} the parametrisation of the gamma distribution used in this paper: for a random variable $G$ with PDF $p_G(x)=\frac{\lambda^r}{\Gamma(r)}x^{r-1}\mathrm{e}^{-\lambda x}$, $x>0$, we write $G\sim\Gamma(r,\lambda)$. Let us now note two useful representations of the VG and McKay Type I distributions.

Let $X_1$ and $X_2$ be independent $\Gamma(r/2,1)$ random variables.  Then, it is almost immediate from \cite[Proposition 1.2, part (vi)]{gaunt vg} that
\begin{equation}\label{vgrep}V_{r,\theta,\sigma}:=(\sqrt{\theta^2+\sigma^2}+\theta)X_1-(\sqrt{\theta^2+\sigma^2}-\theta)X_2\sim\mathrm{VG}(r,\theta,\sigma,0).
\end{equation}
In the limit $\sigma\downarrow0$, $V_{r,\theta,\sigma}$ converges in distribution to a $\Gamma(r/2,(2\theta)^{-1})$ random variable.  It will also be useful to note that $V_{r,\theta,\sigma}=_d \theta V_{r,1,\sqrt{\kappa}}$, where $\kappa=\sigma^2/\theta^2$, which can be seen because we can write $V_{r,\theta,\sigma}=_d\theta[(\sqrt{1+\kappa}+1)X_1-(\sqrt{1+\kappa}-1)X_2]$.



Now, for $m>0$, let $X_1$ and $X_2$ be independent $\Gamma(m+1/2,1)$ random variables.  Let $\phi=bc/(c^2-1)$.  Then, by a re-parametrisation of \cite[Theorem 3]{ha04}, the random variable
\begin{equation}\label{mcrep}Z_{m,c,\phi}:=\frac{\phi(c-1)}{c} X_1+\frac{\phi(c+1)}{c}  X_2
\end{equation}
has the McKay Type I distribution with parameters $m,b,c$.  By basic properties of the gamma distribution, we have that $Z_{m,b,c}\rightarrow_d \Gamma(m+1/2,(2\phi)^{-1})$ as $c\downarrow1$, and $Z_{m,b,c}\rightarrow_d \Gamma(2m+1,1/\phi)$ as $c\rightarrow\infty$.



Since the work of \cite{cr86}, there has been active research into inequalities for the median of the gamma distribution.  We now record some bounds from this literature that will be used in the sequel.  Let $G\sim \Gamma(r,\lambda)$.  Then, it was shown by \cite{bp06} that, for $r>0$ and $\lambda>0$,
\begin{equation*}\frac{r-\log2}{\lambda}<\frac{r}{\lambda}\mathrm{e}^{-\log 2/r}<\mathrm{Med}(G)<\frac{r}{\lambda}\mathrm{e}^{-1/3r}<\frac{1}{\lambda}\bigg(r-\frac{1}{3}+\frac{1}{18r}\bigg).
\end{equation*}
If $r$ is a positive integer, then an alternative bound due to \cite{c94} is available to us:
\begin{equation}\label{gaminteger}\frac{r-1/3}{\lambda}<\mathrm{Med}(G)\leq\frac{r-1+\log 2}{\lambda}, \quad r\in\mathbb{Z}^+,
\end{equation}
with equality in the upper bound for $r=1$. In fact a simple argument can be used to prove that the double inequality (\ref{gaminteger}) holds, with strict inequality, for all $r>1$.  To the best of our knowledge, this result has not previously been stated in the literature.

\begin{theorem}Let $G\sim\Gamma(r,\lambda)$.  Then, for $r>1$ and $\lambda>0$,
\begin{equation*}\frac{r-1/3}{\lambda}<\mathrm{Med}(G)<\frac{r-1+\log2}{\lambda}.
\end{equation*}
\end{theorem}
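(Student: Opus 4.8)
The plan is to reduce to the unit-rate gamma law, then to combine the integer-case bounds (\ref{gaminteger}) of \cite{c94} with the strict monotonicity of $r\mapsto\mathrm{Med}(\Gamma(r,1))-r$ and interpolate across consecutive integers. First I would note that if $G\sim\Gamma(r,\lambda)$ then $\lambda G\sim\Gamma(r,1)$, so $\mathrm{Med}(G)=\nu_r/\lambda$ where $\nu_r:=\mathrm{Med}(\Gamma(r,1))$; since the distribution function of $\Gamma(r,1)$ is continuous and strictly increasing on $(0,\infty)$, $\nu_r$ is the unique positive solution of $\frac{1}{\Gamma(r)}\int_0^{\nu_r}t^{r-1}\mathrm{e}^{-t}\,dt=\tfrac12$. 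The asserted two-sided inequality is therefore equivalent to
\begin{equation*}r-\tfrac13<\nu_r<r-1+\log2,\qquad r>1.\end{equation*}

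The key input is the continuous form of the Chen--Rubin conjecture: the function $h(r):=\nu_r-r$ is strictly decreasing on $(0,\infty)$ (this is due to \cite{bp06}; see also \cite{cr86}). Granting this, fix $r>1$ and put $n=\lfloor r\rfloor\ge1$, so that $n\le r<n+1$ and hence $h(n+1)<h(r)\le h(n)$. For the lower bound, $n+1$ is a positive integer, so the lower bound in (\ref{gaminteger}) gives $h(n+1)=\nu_{n+1}-(n+1)>-\tfrac13$, whence $\nu_r-r=h(r)>-\tfrac13$. For the upper bound, the upper bound in (\ref{gaminteger}) gives $h(n)=\nu_n-n\le-1+\log2$, with equality only when $n=1$; if $n\ge2$ this is already strict, and if $n=1$ then $r>1$ forces $h(r)<h(1)=\nu_1-1=\log2-1$ by the strict monotonicity of $h$. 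In all cases $\nu_r-r=h(r)<-1+\log2$, which completes the proof.

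The only nontrivial point is the strict decrease of $h$, i.e.\ that $\nu_r'<1$. If one prefers not to quote \cite{bp06}, this can be obtained directly: implicit differentiation of $\frac{1}{\Gamma(r)}\int_0^{\nu_r}t^{r-1}\mathrm{e}^{-t}\,dt=\tfrac12$, together with $\Gamma'(r)=\psi(r)\Gamma(r)$ (where $\psi$ is the digamma function) and the identity $\int_0^\infty t^{r-1}(\ln t-\psi(r))\mathrm{e}^{-t}\,dt=0$, reduces the inequality $\nu_r'<1$ to
\begin{equation*}\int_{\nu_r}^\infty t^{r-1}\big(\ln t-\psi(r)\big)\mathrm{e}^{-t}\,dt<\nu_r^{r-1}\mathrm{e}^{-\nu_r}.\end{equation*}
I expect this last estimate to be the real obstacle: as $r\to\infty$ both sides are asymptotic to $\nu_r^{r-1}\mathrm{e}^{-\nu_r}$, so the inequality is essentially sharp and a careful argument (rather than crude bounding) is needed. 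For this reason I would in fact invoke the monotonicity result of \cite{bp06}, after which only the short interpolation above remains.
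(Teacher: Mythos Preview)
Your proposal is correct and takes essentially the same approach as the paper: both reduce to $\lambda=1$, invoke the result of \cite{bp06} that $m'(r)<1$ (equivalently, your $h(r)=\nu_r-r$ is strictly decreasing), and then sandwich using the integer bounds (\ref{gaminteger}) of \cite{c94}. The paper phrases this as ``$r\mapsto\mu(r)-m(r)$ is monotone increasing on $(1,\infty)$'' and reads off the bounds from the integer case, which is exactly your interpolation via $n=\lfloor r\rfloor$ with sign reversed.
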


\begin{proof}Set $\lambda=1$; the general case follows from rescaling. Let $r>1$ and let $\mu(r)$ and $m(r)$ denote the mean and median of a $\Gamma(r,1)$ random variable, respectively.   Corollary 5 of \cite{c94} states that, for $r\in\mathbb{Z}^+$,
\[1-\log2\leq\mu(r)-m(r)<1/3.\]
(A strict inequality is not given in the upper bound in statement of Corollary 5 of \cite{c94}, but from Theorem 1 of the same paper we can see that the inequality is actually strict.)  It was shown by \cite{bp06} that $m'(r)<1$.  Also, $\mu(r)=r$, so that $\mu'(r)=1$.  Therefore $\mu'(r)-m'(r)>0$.  Since $r\mapsto\mu(r)-m(r)$ is a monotone increasing function on $(1,\infty)$, it follows that
\[1-\log2<\mu(r)-m(r)<1/3, \quad \forall r>1.\]
Finally, using that $\mu(r)=r$ and rearranging for $m(r)$ completes the proof.
\end{proof}


We now make two conjectures concerning the median of the sum and difference of independent gamma random variables.

\begin{conjecture}\label{conj1}Let $X_1$ and $X_2$ be independent $\Gamma(r,1)$ random variables.  Define
\begin{equation*}Z_\alpha=\alpha X_1+(2-\alpha)X_2, \quad \alpha>0.
\end{equation*}
Then the function $\alpha\mapsto\mathrm{Med}(Z_\alpha)$ is  non-decreasing for $\alpha\in(0,1)$.
\end{conjecture}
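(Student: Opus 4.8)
The plan is to recast the conjecture as a Schur-concavity statement and then reduce it, through the Schur--Ostrowski criterion, to an inequality for a conditional expectation evaluated at the median. Since $X_1$ and $X_2$ are independent and identically distributed, $\mathrm{Med}(Z_\alpha)$ depends on $\alpha$ only through the coefficient vector $(\alpha,2-\alpha)$, and for $0<\alpha'<\alpha\le1$ these vectors satisfy the majorization relation $(\alpha,2-\alpha)\prec(\alpha',2-\alpha')$ (same sum $2$, and $\max\{\alpha,2-\alpha\}=2-\alpha\le2-\alpha'$). Hence the conjecture is equivalent to the assertion that
\[
\phi(a_1,a_2):=\mathrm{Med}(a_1X_1+a_2X_2)
\]
is Schur-concave on $\{(a_1,a_2):a_1,a_2>0\}$; by the homogeneity $\phi(ca_1,ca_2)=c\,\phi(a_1,a_2)$ it is enough to verify this on the segment $a_1+a_2=2$. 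Note that $\phi$ is symmetric, and, since the density of $a_1X_1+a_2X_2$ is smooth and strictly positive on $(0,\infty)$, the implicit function theorem applied to $F(m;a_1,a_2)=\tfrac12$ (with $F$ the distribution function of $a_1X_1+a_2X_2$) shows that $\phi$ is $C^1$.

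Next I would invoke the Schur--Ostrowski criterion: a symmetric $C^1$ function is Schur-concave iff $(a_1-a_2)\big(\partial_{a_1}\phi-\partial_{a_2}\phi\big)\le0$. Differentiating $F(\phi;a_1,a_2)=\tfrac12$ gives $\partial_{a_i}\phi=-\partial_{a_i}F/f$, where $f$ is the density of $Y:=a_1X_1+a_2X_2$ at the median; and a standard differentiation-under-the-integral computation yields $\partial_{a_i}\mathbb{P}(Y\le x)=-f(x)\,\mathbb{E}[X_i\mid Y=x]$, whence $\partial_{a_i}\phi=\mathbb{E}[X_i\mid Y=\phi]$. Thus the criterion reads $(a_1-a_2)\,\mathbb{E}[X_1-X_2\mid Y=\phi]\le0$; by symmetry it suffices to treat $a_1<a_2$, and then, using $a_1+a_2=2$ together with $a_1\mathbb{E}[X_1\mid Y=\phi]+a_2\mathbb{E}[X_2\mid Y=\phi]=\phi$, the required inequality collapses to
\[
\mathbb{E}\big[X_1\mid a_1X_1+a_2X_2=\mathrm{Med}(Z_\alpha)\big]\ \ge\ \tfrac12\,\mathrm{Med}(Z_\alpha).
\]
In words: conditioned on the convex combination equalling its median, the less heavily weighted summand has conditional mean at least half the median.

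To attack this I would use the explicit conditional law: given $a_1X_1+a_2X_2=m$, the variable $u:=a_1X_1$ has density proportional to $u^{r-1}(m-u)^{r-1}\mathrm{e}^{-(a_2-a_1)u/(a_1a_2)}$ on $(0,m)$, that is, a $\mathrm{Beta}(r,r)$ density on $(0,m)$ exponentially tilted toward $0$. When $a_1=a_2$ this is symmetric about $m/2$, which gives equality and is consistent with $\tfrac{d}{d\alpha}\mathrm{Med}(Z_\alpha)\big|_{\alpha=1}=0$ (conditionally on $X_1+X_2=m$ the summands are exchangeable). For $a_1<a_2$ the tilt pulls $\mathbb{E}[u]$ below $m/2$, and one must show it is not pulled below $a_1m/2$. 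A sign-variation (Karlin-type) argument, exploiting log-concavity of the gamma density, should show that $m\mapsto\mathbb{E}[X_1-X_2\mid Y=m]$ changes sign exactly once, from positive to negative, at some $m^\ast=m^\ast(r,\alpha)$; the desired inequality is then precisely $\mathrm{Med}(Z_\alpha)\le m^\ast$, equivalently $\mathbb{P}(Z_\alpha\le m^\ast)\ge\tfrac12$. An equivalent alternative starting point is the scale-mixture representation $Z_\alpha=_{d}S\big(1+(1-\alpha)U\big)$ with $S\sim\Gamma(2r,1)$, $U=1-2B$, $B\sim\mathrm{Beta}(r,r)$, and $S$ independent of $U$: as $\alpha$ increases the mean-one multiplicative factor $1+(1-\alpha)U$ decreases in the convex (mean-preserving) order, and one wants the median of its product with the log-concave $S$ to be non-increasing under such a contraction.

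The hard part is this last quantitative step: placing $\mathrm{Med}(Z_\alpha)$ on the correct side of the sign-change point $m^\ast$. Heuristically $m^\ast$ should be at least the mean $\mathbb{E}[Z_\alpha]=2r$ whereas $Z_\alpha$ is right-skewed, so that $\mathrm{Med}(Z_\alpha)<2r\le m^\ast$ would close the argument; but converting ``$Z_\alpha$ is right-skewed enough'' into a rigorous comparison is delicate, since both $\mathrm{Med}(Z_\alpha)$ and $m^\ast$ depend on $r$ and $\alpha$ with no useful closed form, and neither the conditional-expectation route nor the convex-order route supplies the needed estimate for free. It is exactly this gap that keeps the statement at the level of a conjecture, although the numerical experiments reported below support it strongly.
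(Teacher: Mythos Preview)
Your write-up correctly recognises that the statement is a conjecture and honestly flags the remaining gap; the paper likewise does not prove it, but only reduces it to a simpler open problem. So the relevant comparison is between the two \emph{partial} approaches.

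The paper's route is different and shorter. Rather than differentiating the median and invoking the Schur--Ostrowski criterion, it applies a classical result of Bock, Diaconis, Huffer and Perlman which states directly that, for independent $\Gamma(r,1)$ variables, the distribution function
\[
F(\bd{c};t)=\mathbb{P}(c_1X_1+c_2X_2\le t)
\]
is Schur-convex in $\bd{c}=(c_1,c_2)$ for every $t\le r(c_1+c_2)$. With $c_1=\alpha$, $c_2=2-\alpha$ this immediately gives the required stochastic ordering $\mathbb{P}(Z_{\alpha_1}\le t)\le\mathbb{P}(Z_{\alpha_2}\le t)$ for $\alpha_1\ge\alpha_2$ and all $t\le 2r$, and hence the monotonicity of the median \emph{provided} $\mathrm{Med}(Z_\alpha)\le 2r=\mathbb{E}[Z_\alpha]$. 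Thus the paper reduces the conjecture in one stroke to the single inequality $\mathrm{Med}(Z_\alpha)\le\mathbb{E}[Z_\alpha]$, which it is unable to verify.

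Your Schur--Ostrowski/conditional-expectation computation is a legitimate alternative and gives more insight into \emph{why} the inequality should hold (the tilted $\mathrm{Beta}(r,r)$ picture is nice), but it lands in essentially the same place: you need $\mathrm{Med}(Z_\alpha)\le m^\ast$, and your heuristic for closing this, namely $\mathrm{Med}(Z_\alpha)<2r\le m^\ast$, requires both the same median--mean inequality \emph{and} the additional claim $m^\ast\ge 2r$. So compared with the paper your reduction is at least as hard, and the paper's use of an off-the-shelf Schur-convexity lemma is the more economical path to the same sticking point. If you want to strengthen your write-up, it would be worth citing the Bock--Diaconis--Huffer--Perlman result explicitly, since it subsumes the differential analysis you carry out and isolates the median--mean inequality as the sole missing ingredient.
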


\begin{conjecture}\label{conj2}With the same notation as in Conjecture \ref{conj1}, the function $\alpha\mapsto\mathrm{Med}(Z_\alpha)$ is non-increasing for $\alpha\in(2,\infty)$.  (If Conjecture \ref{conj1} is true, then it follows that
$\alpha\mapsto\mathrm{Med}(Z_\alpha)$ is non-increasing for $\alpha\in(1,2)$, since $Z_{2-\alpha}=_dZ_\alpha$.)
\end{conjecture}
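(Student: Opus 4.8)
The plan is to recast Conjecture \ref{conj2} as a monotonicity statement for the variance--gamma median and then attack it by a coupling (``stochastic ordering'') argument whose only missing ingredient is a Schur-type inequality. Since $X_1$ and $X_2$ are exchangeable, $Z_{2-\alpha}=_dZ_\alpha$; this yields the parenthetical claim and lets us restrict to $\alpha>2$, where $Z_\alpha=\alpha X_1-(\alpha-2)X_2$ is a difference of independent scaled gammas. Matching the $\Gamma(r,1)$ summands to those in (\ref{vgrep}) forces the variance--gamma shape parameter to equal $2r$, and one then computes $Z_\alpha\sim\mathrm{VG}(2r,1,\sqrt{\alpha(\alpha-2)},0)$; as $\alpha\mapsto\sqrt{\alpha(\alpha-2)}$ is an increasing bijection of $(2,\infty)$ onto $(0,\infty)$, Conjecture \ref{conj2} is equivalent to saying that $\sigma\mapsto\mathrm{Med}(\mathrm{VG}(2r,1,\sigma,0))$ is non-increasing, which is consistent with the endpoint behaviour (convergence to a $\Gamma(r,1/2)$ law as $\sigma\downarrow0$, whose median is controlled by (\ref{gaminteger}), and, for $r=1$, the asymptotics (\ref{propmed})). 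Realise all the $Z_\alpha$, $\alpha>2$, on one probability space through a single pair $(X_1,X_2)$; then for $2<\alpha<\alpha'$ one has the coupling $Z_{\alpha'}=Z_\alpha+(\alpha'-\alpha)D$ with $D:=X_1-X_2$ symmetric about $0$.

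Write $F_\alpha(t):=\mathbb{P}(Z_\alpha\le t)$ and $m_\alpha:=\mathrm{Med}(Z_\alpha)$. Since the densities are strictly positive near the median, $m_{\alpha'}\le m_\alpha$ is equivalent to $F_{\alpha'}(m_\alpha)\ge F_\alpha(m_\alpha)$. From the coupling, $\mathbf{1}(Z_{\alpha'}\le m_\alpha)-\mathbf{1}(Z_\alpha\le m_\alpha)$ equals $-\mathbf{1}(Z_\alpha\le m_\alpha<Z_{\alpha'})$ on $\{D>0\}$ and $+\mathbf{1}(Z_{\alpha'}\le m_\alpha<Z_\alpha)$ on $\{D<0\}$; applying the law-preserving swap $(X_1,X_2)\mapsto(X_2,X_1)$, under which $(Z_\alpha,Z_{\alpha'},D)=_d(Z_\alpha^{\ast},Z_{\alpha'}^{\ast},-D)$ with $Z_\alpha^{\ast}:=\alpha X_2-(\alpha-2)X_1$, converts the $\{D<0\}$ term into a $\{D>0\}$ term and gives
\begin{equation*}F_{\alpha'}(m_\alpha)-F_\alpha(m_\alpha)=\mathbb{P}\big(Z_{\alpha'}^{\ast}\le m_\alpha<Z_\alpha^{\ast},\,D>0\big)-\mathbb{P}\big(Z_\alpha\le m_\alpha<Z_{\alpha'},\,D>0\big).\end{equation*}
Conditioning on $D=d>0$ we have $X_1=X_2+d$, so $Z_\alpha=2X_2+\alpha d$ and $Z_\alpha^{\ast}=2X_2-(\alpha-2)d$; writing $W:=2X_2$ given $D=d$, the two probabilities become $\mathbb{P}\big(W\in(m_\alpha+(\alpha-2)d,\,m_\alpha+(\alpha'-2)d]\big)$ and $\mathbb{P}\big(W\in(m_\alpha-\alpha'd,\,m_\alpha-\alpha d]\big)$, two intervals of the same length $(\alpha'-\alpha)d$, the first lying to the right of $m_\alpha$ and the second to the left. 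The conjecture thus reduces to showing that, after averaging over $d>0$ against the (Laplace-type) law of $D$, the conditional law of $W$ puts at least as much mass on the right interval as on the left one.

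This last step is the main obstacle. A pointwise-in-$d$ argument based on unimodality of the conditional law of $W$ is not conclusive: already for $r=1$ the pointwise comparison fails for small $d$ (both intervals then sit in a nearly flat part of the density, and the left interval, being marginally nearer the mode, is actually the favoured one), the favourable contribution arising only once $d$ is large enough that the left interval protrudes past $0$ and loses mass to the boundary. A genuinely global argument is therefore required, and the natural device is a Schur-type inequality for survival functions of linear combinations of independent gamma variables: one wants $\mathbf{a}=(a_1,a_2)\mapsto\mathbb{P}(a_1X_1+a_2X_2>t)$ to be Schur-monotone in the appropriate direction, specialised along the curve $\alpha\mapsto(\alpha,2-\alpha)$. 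Inequalities of this flavour are known (in the spirit of work of Bock, Diaconis, Huffer and Perlman), but typically only for coefficient vectors with non-negative entries and/or restricted shape parameters, whereas here one needs exactly the difference structure ($a_2=2-\alpha<0$) and arbitrary $r>0$; closing this gap — or, failing that, a direct estimate of the averaged mass difference above — is, I expect, where the real work lies.

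Finally, the same plan applies, and should be more tractable, for Conjecture \ref{conj1}: when $\alpha\in(0,1)$ both coefficients of $Z_\alpha$ are positive, $(\alpha,2-\alpha)$ is genuinely majorized by $(\alpha',2-\alpha')$ for $\alpha<\alpha'<1$, and the Schur ingredient needed is the better-studied Schur-concavity of the distribution function below the common mean $2r$. I would therefore first settle Conjecture \ref{conj1}, and then hope to transfer the argument, via the difference-of-gammas Schur inequality, to obtain Conjecture \ref{conj2}.
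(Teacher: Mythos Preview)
This statement is a \emph{conjecture} in the paper; the paper does not prove it. What the paper offers is a brief sketch of a possible route: invoke the Bock--Diaconis--Huffer--Perlman Schur-convexity result (Lemma~\ref{bodiac}) for the distribution function $F(\bd{c};t)=\mathbb{P}(c_1X_1+c_2X_2\le t)$, and note that for $\alpha>2$ one would need (i) an analogue of that lemma with one negative coefficient, and (ii) a median bound of the form $\mathrm{Med}(Z_\alpha)\ge 2r+1$. Both (i) and (ii) are left open.

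Your proposal is likewise a plan rather than a proof, and you say so plainly. The parenthetical claim (monotonicity on $(1,2)$ from Conjecture~\ref{conj1} via $Z_{2-\alpha}=_d Z_\alpha$) is immediate and you handle it correctly. Your identification $Z_\alpha\sim\mathrm{VG}(2r,1,\sqrt{\alpha(\alpha-2)},0)$ is correct and is exactly the link the paper uses (in the reverse direction) to pass between Conjecture~\ref{conj2} and Conjecture~\ref{conj3}. Your coupling $Z_{\alpha'}=Z_\alpha+(\alpha'-\alpha)D$, the swap $(X_1,X_2)\mapsto(X_2,X_1)$, and the reduction to comparing the conditional mass of $W=2X_2$ on the two equal-length intervals $(m_\alpha+(\alpha-2)d,\,m_\alpha+(\alpha'-2)d]$ and $(m_\alpha-\alpha'd,\,m_\alpha-\alpha d]$ are all computed correctly. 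This is a more hands-on reformulation than the paper's, and it makes transparent \emph{why} a pointwise-in-$d$ unimodality argument cannot work and why a global (Schur-type) inequality is unavoidable.

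That said, your endpoint is the same as the paper's: you explicitly flag that the decisive step is a Schur-type monotonicity for $\mathbb{P}(a_1X_1+a_2X_2>t)$ with $a_2<0$, cite Bock--Diaconis--Huffer--Perlman, and leave it open. Your final paragraph on Conjecture~\ref{conj1} is essentially the paper's own argument (Schur-convexity of $F(\bd{c};t)$ for $t\le 2r$, reducing everything to $\mathrm{Med}(Z_\alpha)\le 2r$). So relative to the paper you have not closed any gap; you have given an alternative, somewhat more concrete, reduction to the same missing ingredient. There is no error in what you wrote, but there is also no proof here --- and the paper does not claim one either.
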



Before using Conjectures \ref{conj1} and \ref{conj2} to formulate conjectures for the median of the VG and McKay Type I distributions,
we present a relation between Schur convexity and Conjectures \ref{conj1} and \ref{conj2}.
In doing so we reduce the problem of proving Conjecture \ref{conj1} to a potentially simpler problem.
The theory of Schur-convexity is very rich and complex, and applicable for deriving various inequalities with multivariate functions.
For what follows it is enough to review necessary definitions in the setup with two variables.
 Given  $\bd{x}, \bd{y} \in \R^2$, we say that
 $\bd{x}=(x_1,x_2)$ is  majorized by $\bd{y}=(y_1,y_2)$ (in notation $\bd{x}\prec\bd{y}$) if and only if $x_1+x_2=y_1+y_2$
 and $\max\{x_1,x_2\}\leq \max\{y_1,y_2\}$. A function
 $\bd{x}\mapsto f(\bd{x})$ is said to be Schur-convex (Schur-concave) on a set $D\subset \R^2$ if
 $\bd{x}\prec \bd{y} \implies f(\bd{x})\leq (\geq) f(\bd{y})$ for all $\bd{x,y}\in D$. More details about Schur convexity and its applications can be found in
the benchmark book \cite{marolk}. The next classical result  may be fundamental for the proof of Conjecture \ref{conj1}.

 \begin{lemma}\cite{bodiac87} \label{bodiac} Let $X_1$ and $X_2$ be independent random variables with $\Gamma (r,1)$ distribution. For
 $\bd{c} =(c_1,c_2)$, where $c_1,c_2>0$, define
\begin{equation} \label{s-func}
  F(\bd{c};t)= \mathbb{P}(c_1X_1+c_2 X_2\leq t).
  \end{equation}
 Then $F$ is  Schur-convex with respect to $\bd{c}$ for $0\leq t\leq  r(c_1+c_2)$ and is Schur-concave for any $t\geq (r+1/2)(c_1+c_2)$.
 \end{lemma}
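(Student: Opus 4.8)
The plan is to prove both assertions via the Schur--Ostrowski differential criterion (see \cite{marolk}). Since $X_1,X_2$ are i.i.d., the function $(c_1,c_2)\mapsto F(\bd{c};t)$ is symmetric, and the two sets $D_1=\{c_1,c_2>0:\,r(c_1+c_2)\ge t\}$ and $D_2=\{c_1,c_2>0:\,(r+1/2)(c_1+c_2)\le t\}$ are symmetric and are unions of the segments $\{c_1+c_2=\text{const}\}$ along which majorization is tested; it therefore suffices to show that $(c_1-c_2)\big(\partial_{c_1}-\partial_{c_2}\big)F(\bd{c};t)\ge 0$ on $D_1$ and $\le 0$ on $D_2$. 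The cases $c_1=c_2$ and $t=0$ are trivial, so assume $c_1>c_2>0$ and $t>0$, and write $f_r(x)=x^{r-1}\mathrm e^{-x}/\Gamma(r)$. An elementary moving-domain differentiation of $F(\bd{c};t)=\mathbb P(c_1X_1+c_2X_2\le t)$, after simplifying the boundary term, yields
\[
\big(\partial_{c_1}-\partial_{c_2}\big)F(\bd{c};t)=-\frac1{c_2}\int_0^{t/c_1}(x_1-x_2)\,f_r(x_1)f_r(x_2)\,\mathrm dx_1,\qquad x_2=\frac{t-c_1x_1}{c_2}.
\]
The change of variables $x_1=(t/c_1)v$ followed by $v\mapsto 1-u$ shows this equals a strictly positive ($\bd c,t$-dependent) factor times
\[
L(\tau):=\int_0^1(u-u_0)\,u^{r-1}(1-u)^{r-1}\mathrm e^{-\tau u}\,\mathrm du,\qquad u_0:=\frac{c_2}{c_1+c_2}\in\big(0,\tfrac12\big),\quad \tau:=\frac{t(c_1-c_2)}{c_1c_2}>0 .
\]
One checks directly that $t\le r(c_1+c_2)\iff\tau\le\tau_1:=r(1-2u_0)/(u_0(1-u_0))$ and $t\ge(r+1/2)(c_1+c_2)\iff\tau\ge\tau_2:=(r+1/2)(1-2u_0)/(u_0(1-u_0))$. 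Thus the lemma reduces to showing $L(\tau)\ge0$ for $\tau\le\tau_1$ and $L(\tau)\le0$ for $\tau\ge\tau_2$.

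To analyse $L$, introduce the tilted density $p_\tau(u)\propto u^{r-1}(1-u)^{r-1}\mathrm e^{-\tau u}$ on $(0,1)$, with $m_k(\tau)=\mathbb E_{p_\tau}[u^k]$; then $L(\tau)=Z(\tau)\big(m_1(\tau)-u_0\big)$ for a positive normaliser $Z(\tau)$, so $\mathrm{sign}\,L(\tau)=\mathrm{sign}\,(m_1(\tau)-u_0)$. Integrating $\frac{\mathrm d}{\mathrm du}\big[u^r(1-u)^r\mathrm e^{-\tau u}\big]=u^{r-1}(1-u)^{r-1}\mathrm e^{-\tau u}\big(r-(2r+\tau)u+\tau u^2\big)$ over $(0,1)$ gives the moment identity $r-(2r+\tau)m_1+\tau m_2=0$. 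For the Schur-convex half I would use only $\mathrm{Var}_{p_\tau}(u)=m_2-m_1^2\ge0$: together with the identity this gives $\tau m_1^2-(2r+\tau)m_1+r\le0$, hence $m_1(\tau)\ge\ell_1(\tau):=2r/(2r+\tau+\sqrt{4r^2+\tau^2})$; as $\ell_1$ is strictly decreasing with $\ell_1(\tau_1)=u_0$, we conclude $m_1(\tau)\ge u_0$, and so $L(\tau)\ge0$, for all $\tau\le\tau_1$, which is Schur-convexity on $D_1$.

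For the Schur-concave half we need $m_1(\tau)\le u_0$ for $\tau\ge\tau_2$; it is enough (and natural) to prove the pointwise bound $m_1(\tau)\le\ell_2(\tau):=(2r+1)/\big((2r+1)+\tau+\sqrt{(2r+1)^2+\tau^2}\big)$ for all $\tau>0$, since $\ell_2$ is strictly decreasing with $\ell_2(\tau_2)=u_0$. Eliminating $m_2$ through the moment identity, $m_1\le\ell_2$ is equivalent to $\mathrm{Var}_{p_\tau}(u)\le\big(1-2m_1(\tau)\big)/(2\tau)$; and since $m_1'(\tau)=-\mathrm{Var}_{p_\tau}(u)$ with $m_1(0)=\tfrac12$, we have $\tfrac12-m_1(\tau)=\int_0^\tau\mathrm{Var}_{p_s}(u)\,\mathrm ds$, so this is precisely $\mathrm{Var}_{p_\tau}(u)\le\frac1\tau\int_0^\tau\mathrm{Var}_{p_s}(u)\,\mathrm ds$, which holds as soon as $s\mapsto\mathrm{Var}_{p_s}(u)$ is non-increasing on $(0,\infty)$. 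Finally $\frac{\mathrm d}{\mathrm ds}\mathrm{Var}_{p_s}(u)=-\mathbb E_{p_s}\big[(u-m_1(s))^3\big]$, so the entire matter reduces to showing that $p_\tau$ has non-negative third central moment for every $\tau\ge0$ --- that exponential tilting skews $\mathrm{Beta}(r,r)$ to the right. This is the step I expect to be the main obstacle: it is intuitively clear, since $p_0$ is symmetric and increasing $\tau$ shifts mass monotonically towards $0$, but a rigorous proof seems to need real work, e.g.\ a variation-diminishing/total-positivity analysis of the family $\{p_\tau\}_{\tau\ge0}$, or an induction exploiting the hierarchy of identities $\int_0^1\frac{\mathrm d}{\mathrm du}\big[u^{r+k}(1-u)^{r+k}\mathrm e^{-\tau u}\big]\,\mathrm du=0$ tested against appropriate functions.
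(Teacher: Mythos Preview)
The paper does not prove this lemma at all: it is stated with a citation to \cite{bodiac87} (Bock, Diaconis, Huffer and Perlman, 1987) and used as a black box, so there is no in-paper argument to compare yours against.

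On the substance of your attempt: the reduction via the Schur--Ostrowski criterion, the boundary differentiation giving
\[
(\partial_{c_1}-\partial_{c_2})F(\bd{c};t)=-\frac{1}{c_2}\int_0^{t/c_1}(x_1-x_2)\,f_r(x_1)f_r(x_2)\,\mathrm d x_1,
\]
the change of variables to the one-dimensional integral $L(\tau)$, and the moment identity $r-(2r+\tau)m_1+\tau m_2=0$ are all correct. Your proof of the Schur-convex half is complete and elegant: using only $m_2\ge m_1^2$ you get $m_1(\tau)\ge\ell_1(\tau)$, and a direct check shows $u_0$ is the smaller root of $\tau x^2-(2r+\tau)x+r$ at $\tau=\tau_1$, so $\ell_1(\tau_1)=u_0$ and hence $L(\tau)\ge0$ for $\tau\le\tau_1$.

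The Schur-concave half, however, is left genuinely open, as you yourself flag. Your chain of equivalences reducing $m_1(\tau)\le\ell_2(\tau)$ to non-negativity of the third central moment of the tilted $\mathrm{Beta}(r,r)$ density $p_\tau$ is correct (these are standard exponential-family cumulant identities), but the final claim that $\mathbb E_{p_\tau}[(u-m_1(\tau))^3]\ge0$ for all $\tau\ge0$ is not proved, and the heuristics you offer (``exponential tilting skews to the right'') are not a proof. So as written only half of the lemma is established. If you wish to complete the argument you should either supply a proof of the skewness claim or consult \cite{bodiac87} directly for their original approach.
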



Let $Z_\alpha$ be as in Conjecture \ref{conj1}. By Lemma \ref{bodiac} with $c_1=\alpha$ and $c_2=2-\alpha$, $\alpha \in (0,1)$, we get that
the function $(\alpha,1-\alpha)\mapsto \mathbb{P}(Z_{\alpha}\leq t)$ is Schur-convex for $t\leq 2r$.
It is easy to see that
\[ (\alpha_1,2-\alpha_1)\prec (\alpha_2,2-\alpha_2) \iff \alpha_1\geq \alpha_2.\]
Hence, by Schur-convexity, we have that
\begin{equation}
\label{hprf}
\alpha_1\geq \alpha_2 \implies \mathbb{P}(Z_{\alpha_1}\leq t)\leq \mathbb{P}(Z_{\alpha_2} \leq t),\quad t\in [0,2r].
\end{equation}
Let $m_1,m_2$ be the medians of $Z_{\alpha_1}$ and $Z_{\alpha_2}$, respectively. From (\ref{hprf}) with $t=m_1$ we get
\[ \alpha_1\geq \alpha_2 \implies  \frac{1}{2} \leq \mathbb{P}(Z_{\alpha_2}\leq m_1) \implies m_2\leq m_1. \]
To complete the proof of Conjecture \ref{conj1}, we have to show  only that $\mathrm{Med}(Z_\alpha)\in [0,2r]$,
that is,  $\mathrm{Med(Z_\alpha)} \leq \mathbb{E} (Z_\alpha)  $ for $\alpha\in (0,1)$.
 Although there is a reasonable literature concerning general methods for proving median-mean inequalities for probability distributions
 (see, for example, \cite{gm77,v79}), we were not able to apply
 the usual sufficient conditions in  the case of the McKay Type I distribution. \hfill $\square$


\medskip

Conjecture \ref{conj2} might be proved using an  approach similar to the one discussed above.
Since $2-\alpha <0$, the second part of Lemma \ref{bodiac} (Schur concavity)
 has to be proved from scratch. Further, it is necessary to  prove that  $\mathrm{Med}(Z_\alpha) \geq 2r+1$ for $\alpha \geq 2$.

\medskip

Through the representations of the VG and McKay Type I distributions in terms of differences and sums of independent gamma random variables we can recast Conjectures \ref{conj1} and \ref{conj2} in terms of conjectured monotonicity results for the medians of these distributions.  In addition, the inequalities for the median of the gamma distribution presented earlier in this section allow us to conjecture tight bounds for the medians of the VG and McKay Type I distributions.

\begin{conjecture}\label{conj3}Let $V_{r,\theta,\sigma}\sim \mathrm{VG}(r,\theta,\sigma,0)$.  Fix $r>0$ and $\theta>0$.  Then $\sigma\mapsto\mathrm{Med}(V_{r,\theta,\sigma})$ is strictly decreasing for $\sigma\in(0,\infty)$. Consequently, $\mathrm{Med}(V_{r,\theta,\sigma})<\mathrm{Med}(G)$, where $G\sim \Gamma(r/2,(2\theta)^{-1})$. In addition we conjecture that $\lim_{\sigma\rightarrow\infty}\mathrm{Med}(V_{r,\theta,\sigma})=(r-1)\theta$ for $r>1$, and $\lim_{\sigma\rightarrow\infty}\mathrm{Med}(V_{r,\theta,\sigma})=0$ for $r\leq1$. We therefore have the following conjectured inequalities:
\begin{equation*}(r-1)\theta<\mathrm{Med}(V_{r,\theta,\sigma})<r\theta \mathrm{e}^{-2/3r}<\bigg(r-\frac{2}{3}+\frac{2}{9r}\bigg)\theta, \quad r>0,
\end{equation*}
and
\begin{equation}\label{concon}\mathrm{Med}(V_{r,\theta,\sigma})\leq(r+2\log2-2)\theta, \quad r\geq2.
\end{equation}
\end{conjecture}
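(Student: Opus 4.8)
The plan is to deduce the conjectured monotonicity, and hence all of the stated inequalities, from Conjecture \ref{conj2} via the representation (\ref{vgrep}). Fix $r>0$ and $\theta>0$, put $\kappa=\sigma^{2}/\theta^{2}$, and let $X_{1},X_{2}$ be independent $\Gamma(r/2,1)$ random variables. By (\ref{vgrep}) and the scaling identity $V_{r,\theta,\sigma}=_{d}\theta V_{r,1,\sqrt{\kappa}}$ recorded just after it,
\[
V_{r,\theta,\sigma}=_{d}\theta\big[(\sqrt{1+\kappa}+1)X_{1}-(\sqrt{1+\kappa}-1)X_{2}\big]=_{d}\theta Z_{\alpha},\qquad \alpha:=1+\sqrt{1+\kappa}\in(2,\infty),
\]
where $Z_{\alpha}=\alpha X_{1}+(2-\alpha)X_{2}$ is exactly the random variable of Conjectures \ref{conj1}--\ref{conj2} with $r$ replaced by $r/2$ (the two coefficients always summing to $2$). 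Since $\sigma\mapsto\kappa\mapsto\alpha$ is a strictly increasing bijection of $(0,\infty)$ onto $(2,\infty)$, the asserted strict decrease of $\sigma\mapsto\mathrm{Med}(V_{r,\theta,\sigma})=\theta\,\mathrm{Med}(Z_{\alpha})$ is precisely a strict form of Conjecture \ref{conj2}. Proving Conjecture \ref{conj2} is therefore the main obstacle: following the route indicated after Lemma \ref{bodiac} one needs the appropriate Schur-monotonicity of $\bd{c}\mapsto\mathbb{P}(c_{1}X_{1}+c_{2}X_{2}\le t)$ for the mixed-sign vector $\bd{c}=(\alpha,2-\alpha)$ --- Lemma \ref{bodiac} only covers $c_{1},c_{2}>0$ --- valid for $t$ up to $\mathbb{E}[Z_{\alpha}]$, together with the mean--median inequality $\mathrm{Med}(Z_{\alpha})\le\mathbb{E}[Z_{\alpha}]$ (equivalently $\mathrm{Med}(V_{r,\theta,\sigma})\le r\theta$), which does not appear to follow from the usual sufficient conditions and would need a separate argument.

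Granting the strict monotonicity, the two endpoint limits pin down the bounds. As $\sigma\downarrow0$ we have $\alpha\downarrow2$ and $Z_{2}=2X_{1}\sim\Gamma(r/2,1/2)$, so $\theta Z_{2}\sim\Gamma(r/2,(2\theta)^{-1})=G$; combining the weak convergence $V_{r,\theta,\sigma}\to_{d}G$ noted in the excerpt with continuity of the median in $\sigma$ (the VG distribution function is continuous, strictly increasing near the median, and varies continuously with the parameters) yields $\mathrm{Med}(V_{r,\theta,\sigma})\to\mathrm{Med}(G)$ as $\sigma\downarrow0$, whence $\mathrm{Med}(V_{r,\theta,\sigma})<\mathrm{Med}(G)$ for every $\sigma>0$. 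Substituting $G\sim\Gamma(r/2,(2\theta)^{-1})$ into the gamma-median bounds recalled earlier in this section then gives $\mathrm{Med}(V_{r,\theta,\sigma})<r\theta\mathrm{e}^{-2/3r}<(r-\tfrac{2}{3}+\tfrac{2}{9r})\theta$ for $r>0$, and $\mathrm{Med}(V_{r,\theta,\sigma})<\mathrm{Med}(G)\le(r+2\log2-2)\theta$ for $r\ge2$ (where the gamma shape $r/2\ge1$).

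For the limit $\sigma\to\infty$ I would argue directly from the joint law of $(D,S):=(X_{1}-X_{2},X_{1}+X_{2})$, for which $V_{r,1,\sqrt{\kappa}}=_{d}aD+S$ with $a:=\sqrt{1+\kappa}\to\infty$ and $S\sim\Gamma(r,1)$. Exchangeability of $(X_{1},X_{2})$ makes the conditional law of $D$ given $S$ symmetric about $0$, and a short computation with the gamma densities gives $f_{D\mid S}(0\mid S)=C_{r}/S$ for an explicit constant $C_{r}$. Substituting $u=aD$ and expanding the conditional probabilities in powers of $1/a$ gives
\[
\mathbb{P}(aD+S\le m)=\tfrac{1}{2}+\frac{C_{r}}{a}\big(m\,\mathbb{E}[S^{-1}]-1\big)+O(a^{-2}).
\]
For $r>1$ one has $\mathbb{E}[S^{-1}]=1/(r-1)$, so equating the leading correction to zero gives $\mathrm{Med}(V_{r,1,\sqrt{\kappa}})\to r-1$, i.e.\ $\mathrm{Med}(V_{r,\theta,\sigma})\to(r-1)\theta$. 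For $r\le1$ the integral $\mathbb{E}[S^{-1}]$ diverges (reflecting the singularity of the VG density at the origin) and a separate, more delicate, analysis of the same flavour should give $\mathrm{Med}(V_{r,\theta,\sigma})\to0$; in any case $\mathrm{Med}(V_{r,\theta,\sigma})>0$ for every $\sigma>0$, since $\mathbb{P}(aD+S<0)=\mathbb{P}(X_{1}/X_{2}<(a-1)/(a+1))<\tfrac{1}{2}$ by exchangeability. Making the above expansion rigorous --- controlling the remainder uniformly in $m$, and treating the contribution of $S$ near $0$ when $r\le1$ --- is the second, more technical, hurdle. Together with the strict decrease, these limits give $(r-1)\theta<\mathrm{Med}(V_{r,\theta,\sigma})$ (for $r\le1$ this is immediate from $\mathrm{Med}(V_{r,\theta,\sigma})>0\ge(r-1)\theta$), which completes the conjectured chain of inequalities.
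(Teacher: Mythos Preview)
This statement is a \emph{Conjecture}, not a proved theorem; the paper supplies no proof, only motivation via the representation (\ref{vgrep}) and numerical support (the Remark following the conjecture and Table~\ref{table1}).  Your proposal correctly reproduces that motivation: the monotonicity claim is exactly a strict form of Conjecture~\ref{conj2} applied with shape parameter $r/2$ in place of $r$, and the $\sigma\downarrow0$ limit to $G\sim\Gamma(r/2,(2\theta)^{-1})$ is precisely what the paper uses to transfer the known gamma-median bounds to the conjectured VG bounds.  You are right that the gap between the non-increase asserted in Conjecture~\ref{conj2} and the strict decrease asserted here is a further (minor) point the paper leaves implicit.

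Where you go beyond the paper is in the heuristic for the $\sigma\to\infty$ limit.  The paper simply \emph{conjectures} $\lim_{\sigma\to\infty}\mathrm{Med}(V_{r,\theta,\sigma})=(r-1)\theta$ on the basis of the exact $r=2$ case (\ref{medform})--(\ref{propmed}) and the numerics in Table~\ref{table1}; it offers no analytic argument.  Your conditional-density expansion via $f_{D\mid S}(0\mid S)=C_r/S$ and $\mathbb{E}[S^{-1}]=1/(r-1)$ is correct at the formal level and is a genuine addition; the exchangeability argument for $\mathrm{Med}(V_{r,\theta,\sigma})>0$ is also correct.  But, as you yourself flag, controlling the remainder uniformly near the median and treating the divergence of $\mathbb{E}[S^{-1}]$ when $r\le1$ are real analytic hurdles, not just bookkeeping.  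So neither the paper nor your proposal constitutes a proof; your write-up is an honest, and in places more detailed, account of why the conjecture is plausible than the paper itself provides.
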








\begin{conjecture}\label{conj4}Let $\phi=bc/(c^2-1)$ and let $Z_{m,c,\phi}$ be a McKay Type I random variable with parameters $m$, $b$ and $c$. Fix $m>-1/2$ and $\phi>0$.  Then $\phi\mapsto\mathrm{Med}(Z_{m,c,\phi})$ is  strictly increasing for $c\in(1,\infty)$.  Consequently, $\mathrm{Med}(G_1)<\mathrm{Med}(Z_{m,c,\phi})<\mathrm{Med}(G_2)$, where $G_1\sim\Gamma(m+1/2,(2\phi)^{-1})$ and $G_2\sim\Gamma(2m+1,1/\phi) $.  Therefore, for $m>-1/2,$
\begin{align*}(2m+1-\log2)\phi<(2m&+1)\phi\mathrm{e}^{-\log2/(2m+1)}<\mathrm{Med}(Z_{m,c,\phi})<\\
&<(2m+1)\phi\mathrm{e}^{-1/3(2m+1)}<\bigg(2m+\frac{2}{3}+\frac{1}{18(2m+1)}\bigg)\phi,
\end{align*}
and, for $m\geq1/2$,
\begin{equation*}(2m+1/3)\phi<\mathrm{Med}(Z_{m,c,\phi})\leq(2m+\log 2)\phi.
\end{equation*}
\end{conjecture}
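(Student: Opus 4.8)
To prove Conjecture~\ref{conj4} the plan is to reduce it to Conjecture~\ref{conj1} together with the distributional limits and the gamma median bounds already recorded in this section. Start from the representation (\ref{mcrep}): with $r=m+1/2$ and $X_1,X_2$ independent $\Gamma(r,1)$ random variables,
\[
Z_{m,c,\phi}=\frac{\phi(c-1)}{c}\,X_1+\frac{\phi(c+1)}{c}\,X_2 .
\]
The decisive algebraic fact is that the two coefficients always sum to $2\phi$, regardless of $c$. Putting $\alpha=1-1/c$, which is a strictly increasing bijection of $(1,\infty)$ onto $(0,1)$, these coefficients become $\alpha\phi$ and $(2-\alpha)\phi$, so that $Z_{m,c,\phi}=_d\phi\bigl(\alpha X_1+(2-\alpha)X_2\bigr)=\phi Z_\alpha$ in the notation of Conjecture~\ref{conj1} (with its parameter taken equal to $r=m+1/2$), and hence $\mathrm{Med}(Z_{m,c,\phi})=\phi\,\mathrm{Med}(Z_\alpha)$. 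Thus Conjecture~\ref{conj1} is exactly the statement that $c\mapsto\mathrm{Med}(Z_{m,c,\phi})$ is non-decreasing on $(1,\infty)$ --- and only Conjecture~\ref{conj1} is needed here, since $\alpha$ ranges over $(0,1)$. To upgrade non-decreasing to strictly increasing one uses that the density (\ref{seven7}) is strictly positive on $(0,\infty)$, so the McKay Type~I CDF is continuous and strictly increasing and its median is the unique $1/2$-level point; were two such medians equal, the strict form of the Schur-convexity in Lemma~\ref{bodiac} on $\{c_1+c_2=2,\ c_1,c_2>0\}$, valid for levels below the common mean, would force $1/2<1/2$.

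Next, combine the monotonicity with the two limiting laws recorded earlier in this section (following the representation (\ref{mcrep})), namely $Z_{m,c,\phi}\rightarrow_d G_1:=\Gamma(m+1/2,(2\phi)^{-1})$ as $c\downarrow1$ and $Z_{m,c,\phi}\rightarrow_d G_2:=\Gamma(2m+1,1/\phi)$ as $c\rightarrow\infty$. Each of $G_1,G_2$ has a unique median, so the medians converge along the two limits, and strict monotonicity on $(1,\infty)$ then yields $\mathrm{Med}(G_1)<\mathrm{Med}(Z_{m,c,\phi})<\mathrm{Med}(G_2)$ for every $c\in(1,\infty)$. Substituting into this sandwich the gamma median estimates already collected --- the Berg--Pedersen chain applied to $G_1$ for the lower estimates and to $G_2$ for the upper estimates, and, when $m\geq1/2$ (so that $G_1$ has shape at least $1$ and $G_2$ shape at least $2$), the bound (\ref{gaminteger}) and its extension to all shapes greater than $1$ proved above, which give $(2m+1/3)\phi<\mathrm{Med}(Z_{m,c,\phi})\leq(2m+\log2)\phi$ --- delivers the displayed two-sided inequalities after elementary simplification.

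The real obstacle is the one the authors already single out in the discussion after Conjecture~\ref{conj1}: invoking the Schur-convex half of Lemma~\ref{bodiac} at the relevant level $t$ requires $t\le\mathbb{E}[Z_\alpha]=2r$, and hence one must know that $\mathrm{Med}(Z_\alpha)\le\mathbb{E}[Z_\alpha]$, that is, the median--mean inequality $\mathrm{Med}\le\mathbb{E}$ for the McKay Type~I distribution (\ref{seven7}) for all admissible $m,b,c$. The standard sufficient conditions for median--mean inequalities do not seem to apply to (\ref{seven7}) directly, so the hard part is to find a tailored argument --- for instance by exploiting the gamma-sum representation (\ref{mcrep}) and a monotone comparison with the limiting gamma law at $c\downarrow1$, or via a log-concavity/unimodality estimate for the density --- that establishes it. A minor secondary point, needed only for the strict (rather than weak) monotonicity in Conjecture~\ref{conj4}, is to verify that the Schur-convexity in Lemma~\ref{bodiac} is strict on the open simplex, which should be readable off from its proof.
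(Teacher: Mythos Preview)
The statement is a \emph{conjecture}: the paper does not prove it. What the paper offers is exactly the reduction you describe --- it introduces the representation (\ref{mcrep}), observes that Conjecture~\ref{conj4} is the recasting of Conjecture~\ref{conj1} for the McKay Type~I law, records the limiting gamma distributions at $c\downarrow 1$ and $c\to\infty$, and then isolates the same obstacle you name (the missing inequality $\mathrm{Med}(Z_\alpha)\le \mathbb{E}[Z_\alpha]$ needed to invoke Lemma~\ref{bodiac} at the relevant level). In that sense your plan and the paper's discussion coincide, and you correctly note that only Conjecture~\ref{conj1}, not Conjecture~\ref{conj2}, is required here since $\alpha=1-1/c\in(0,1)$.

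There is, however, a concrete slip in your derivation of the \emph{first} displayed chain of inequalities. Applying the Berg--Pedersen bounds to $G_1\sim\Gamma(m+\tfrac12,(2\phi)^{-1})$, which has shape $m+\tfrac12$ and rate $(2\phi)^{-1}$, yields
\[
\mathrm{Med}(G_1)>(2m+1)\phi\,\mathrm{e}^{-2\log 2/(2m+1)}>(2m+1-2\log 2)\phi,
\]
not the quantities $(2m+1)\phi\,\mathrm{e}^{-\log 2/(2m+1)}$ and $(2m+1-\log 2)\phi$ that appear in the conjecture. The latter are precisely the Berg--Pedersen \emph{lower} bounds for $G_2\sim\Gamma(2m+1,1/\phi)$, and since the sandwich gives $\mathrm{Med}(Z_{m,c,\phi})<\mathrm{Med}(G_2)$, a lower bound for $\mathrm{Med}(G_2)$ does not transfer to $\mathrm{Med}(Z_{m,c,\phi})$. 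So ``Berg--Pedersen applied to $G_1$'' does not deliver the stated lower side of the first chain; your argument produces a weaker pair of constants. (By contrast, the second chain for $m\ge 1/2$ does arise as you say: the lower bound from the Choi-type inequality applied to $G_1$, the upper from its extension applied to $G_2$.)

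A smaller point: the upgrade from non-decreasing to strictly increasing relies on a \emph{strict} form of Schur-convexity in Lemma~\ref{bodiac}, which is not part of the lemma as quoted; the paper does not attempt this and simply states strict monotonicity as part of the conjecture.
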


Phrasing Conjecture \ref{conj3} in the parametrisation (\ref{seven}) of the VG distribution gives the conjectured inequality: for $X\sim\mathrm{VG}_2(\lambda,\alpha,\beta,0)$,
$\mathrm{Med}(X)>\frac{\beta}{\gamma^2}(2\lambda-1)$, $\lambda>1/2$. Based on the functional form of the inequalities for the mode of the GH distribution in Theorem \ref{thmghd}, we generalise this conjectured lower bound for the VG distribution to the GH distribution through in the following conjecture.
We have carried out numerical experiments using \emph{Mathematica} that suggest this conjecture holds.

\begin{conjecture}\label{conj5}
For $X\sim GH(\lambda,\alpha,\beta,\delta,0)$, we conjecture that, for $\lambda>1/2$, and $\beta>0$,
\begin{equation*}\mathrm{Med}(X)>\frac{\beta}{\gamma^2}\Big[\lambda-1/2+\sqrt{(\lambda-1/2)^2+\delta^2\gamma^2}\Big].
\end{equation*}
\end{conjecture}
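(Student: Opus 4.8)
\medskip
\noindent\textbf{Proof strategy.} Write $x_0=\frac{\beta}{\gamma^2}\big[\lambda-\tfrac12+\sqrt{(\lambda-\tfrac12)^2+\delta^2\gamma^2}\big]$ for the conjectured bound. This is exactly the upper bound for the mode $M$ in (\ref{mode1}), so for $\lambda>\tfrac12$ we have $M<x_0$, and also $x_0<\mathbb{E}[X]$ (by the lower bound in (\ref{mean1}) and the monotonicity of $u\mapsto u+\sqrt{u^2+\delta^2\gamma^2}$); thus the conjecture asserts in particular the expected but still-unproved inequality $\mathrm{Med}(X)>M$. By the scaling $GH(\lambda,\alpha,\beta,\delta,0)=_d c\cdot GH(\lambda,\alpha/c,\beta/c,c\delta,0)$ we may normalise $\gamma=1$, so that only $\beta>0$, $\delta\geq0$ and $\lambda>\tfrac12$ remain free.

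The first step is a reflection reduction of $\mathbb{P}(X\leq x_0)<\tfrac12$. Writing the density (\ref{ghpdf}) (with $\mu=0$) as $p_{\mathrm{GH}}(x)=C\mathrm{e}^{\beta x}q(x)$ with $q(x)=(\delta^2+x^2)^{(\lambda-1/2)/2}K_{\lambda-1/2}\big(\alpha\sqrt{\delta^2+x^2}\big)$ even and $C>0$, the substitution $x\mapsto-x$ on $(-\infty,x_0)$ shows that the conjecture is equivalent to
\[\int_0^{x_0}\cosh(\beta x)\,q(x)\,\mathrm{d}x<\int_{x_0}^{\infty}\sinh(\beta x)\,q(x)\,\mathrm{d}x.\]
Integrating by parts on both sides, using the differentiation formula (\ref{ddbk}) in the form $q'(x)=-\rho(x)q(x)$ with $\rho(x):=\frac{\alpha x}{\sqrt{\delta^2+x^2}}\cdot\frac{K_{\lambda-3/2}(\alpha\sqrt{\delta^2+x^2})}{K_{\lambda-1/2}(\alpha\sqrt{\delta^2+x^2})}$ (the boundary terms at infinity vanishing since $q(x)\cosh(\beta x)=O(x^{\lambda-1}\mathrm{e}^{-(\alpha-\beta)x})$ and $\alpha>\beta$), this is in turn equivalent to the single inequality
\[q(x_0)\mathrm{e}^{\beta x_0}+\int_0^{x_0}\sinh(\beta x)\,\rho(x)q(x)\,\mathrm{d}x<\int_{x_0}^{\infty}\cosh(\beta x)\,\rho(x)q(x)\,\mathrm{d}x.\]

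This reformulation is promising because $\rho$ is exactly the function controlling the mode: $\rho(M)=\beta$, and unimodality of the GH law \cite{yu} forces $\rho(x)<\beta$ on $(0,M)$ and $\rho(x)>\beta$ on $(M,\infty)$. Moreover the lower bound in (\ref{seg1}), applied to the Bessel ratio, gives $\rho(x)>\frac{\alpha^2 x}{\lambda-1/2+\sqrt{(\lambda-1/2)^2+\alpha^2(\delta^2+x^2)}}$, and the right-hand side here is increasing in $x$ and equals $\beta$ exactly at $x=x_0$ (this is how $x_0$ arose in the proof of Theorem \ref{thmghd}), hence exceeds $\beta$ for every $x>x_0$. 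The plan is therefore to bound $\rho$ from above on $(0,x_0)$ (by the upper bound in (\ref{seg1}), or by (\ref{seg2}) or (\ref{seg3}) where they apply) and from below on $(x_0,\infty)$ (by the lower bound in (\ref{seg1})), and to bound $q$ between explicit elementary factors, thereby replacing all Bessel ratios by rational-algebraic functions of $x$ and reducing the displayed inequality to a comparison of explicit integrals of $\cosh(\beta x)$ and $\sinh(\beta x)$ against algebraic weights, which one would then attempt to verify directly (e.g.\ after a further rescaling of $x$ by $x_0$ and of $\delta$ by $\gamma^{-1}$).

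I expect this last estimate to be the main obstacle: the bounds (\ref{seg1})--(\ref{seg3}) are only asymptotically sharp, so the truncated integral inequality may not close with first-order estimates and could require a two-term refinement, e.g.\ splitting $(0,x_0)$ at the mode $M$ (where $\rho$ crosses $\beta$) and using sharper control of $q$ over the bulk of the distribution. A complementary line of attack is the classical normal mean--variance mixture representation $X=_d\beta W+\sqrt{W}\,Z$, where $W$ has the generalized inverse Gaussian law $\mathrm{GIG}(\lambda,\delta,\gamma)$ and $Z\sim N(0,1)$ is independent of $W$: conditioning on $W$ turns the conjecture into $\mathbb{E}\big[\Phi\big((x_0-\beta W)/\sqrt{W}\big)\big]<\tfrac12$, where $w\mapsto\Phi\big((x_0-\beta w)/\sqrt{w}\big)$ is strictly decreasing with unique zero $w^\ast=x_0/\beta$, the positive root of $\gamma^2w^2-(2\lambda-1)w-\delta^2=0$, which lies strictly between the mode and the mean of $W$. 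Since $x_0<\mathbb{E}[X]=\beta\mathbb{E}[W]$, Jensen's inequality would settle the matter if $w\mapsto\Phi\big((x_0-\beta w)/\sqrt{w}\big)$ were concave; it is, however, concave only on an initial interval and convex thereafter, so one would need an S-shaped refinement of Jensen's inequality or a stochastic-domination bound on $W$. Finally, the case $\delta=0$ is precisely the VG conjecture $\mathrm{Med}(\mathrm{VG}_2(\lambda,\alpha,\beta,0))>\frac{\beta}{\gamma^2}(2\lambda-1)$ contained in Conjecture \ref{conj3}, for which the Schur-convexity approach of Section \ref{sec3} is available; a complete proof of Conjecture \ref{conj5} would plausibly combine that base case with a continuation argument in $\delta$, though --- as the authors note --- the absence of a representation of $GH$ with $\delta\neq0$ as a difference of independent gamma variables is exactly what obstructs that route.
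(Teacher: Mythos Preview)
The paper contains no proof of this statement: it is explicitly labelled a \emph{conjecture}, motivated by the $\delta=0$ (VG) case of Conjecture~\ref{conj3} and by the functional form of the mode bounds in Theorem~\ref{thmghd}, and the only support offered is numerical experimentation in \emph{Mathematica}. There is therefore nothing in the paper to compare your argument against.

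Your proposal is not a proof either, and you are candid about this. The reformulations you carry out --- identifying $x_0$ with the upper mode bound of (\ref{mode1}), the even/odd decomposition yielding $\int_0^{x_0}\cosh(\beta x)q(x)\,\mathrm{d}x<\int_{x_0}^\infty\sinh(\beta x)q(x)\,\mathrm{d}x$, the expression $q'(x)=-\rho(x)q(x)$ with $\rho$ the mode-defining function, and the observation that the lower Bessel-ratio bound (\ref{seg1}) gives $\rho(x)>\beta$ precisely for $x>x_0$ --- are all correct and constitute a sensible opening. But the programme stalls exactly where you say it does: replacing $\rho$ and $q$ by the first-order envelopes from (\ref{seg1})--(\ref{seg3}) leaves an integral inequality that is not obviously true, and you give no mechanism for closing it. The normal mean--variance mixture reformulation $\mathbb{E}[\Phi((x_0-\beta W)/\sqrt{W})]<\tfrac12$ is also correct and interesting, but as you note the integrand is S-shaped rather than concave, so plain Jensen fails; you do not supply the refined convexity or stochastic-ordering argument that would be needed.

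In short: the paper does not prove the statement, and neither does your proposal. What you have is a well-organised outline of two plausible attacks, each with a clearly identified gap that remains genuinely open.
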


\begin{remark} (i) The conjectured limit $\lim_{\sigma\rightarrow\infty}\mathrm{Med}(V_{r,\theta,\sigma})=(r-1)\theta$ is seen to be true for $r=2$ by (\ref{propmed}). We also note that the conjectured upper bound (\ref{concon}) is attained in the case $r=2$: $\lim_{\sigma\downarrow0}\mathrm{Med}(V_{2,\theta,\sigma})=2\theta\log2$ (see again (\ref{propmed})).

\vspace{2mm}

\noindent{(ii)} We carried out numerical experiments using \emph{Mathematica} which support the conjectured monotonicity results of Conjectures \ref{conj3} and \ref{conj4} (which in turn support Conjectures \ref{conj1} and \ref{conj2}).
For a given distribution, with specific parameter values, our procedure was to make an initial guess $m_0$ for the median, based on the conjectured inequalities, and then use \emph{Mathematica} to numerically evaluate the integral $I_0=\int_{m_0}^\infty p(x)\,\mathrm{d}x$, where $p(x)$ is the PDF. If $I_0>0.5$, then we make a second guess $m_1 >m_0$, whilst if $I_0<0.5$ we make a second guess $m_1< m_0$. We then numerically evaluate the integral $I_1=\int_{m_1}^\infty p(x)\,\mathrm{d}x$, and iteratively repeat the procedure until an estimate for the median $m$ has been obtained up to our desired accuracy of 3 decimal places.
Some results are reported in Tables \ref{table1} and \ref{table2}.  For testing the conjectures, it suffices to take $\theta=1$ and $\phi=1$, because $\mathrm{Med}(V_{r,\theta,\sigma})=\theta \mathrm{Med}(V_{r,1,\sqrt{\kappa}})$, where $\kappa=\sigma^2/\theta^2$, and $\mathrm{Med}(Z_{m,c,\phi})=\phi \mathrm{Med}(Z_{m,c,1})$ (by (\ref{vgrep}) and (\ref{mcrep})).  We note that Table \ref{table1} also provides support for the conjectured limits $\lim_{\sigma\rightarrow\infty}\mathrm{Med}(V_{r,\theta,\sigma})=(r-1)\theta$ for $r>1$, and $\lim_{\sigma\rightarrow\infty}\mathrm{Med}(V_{r,\theta,\sigma})=0$ for $r\leq1$.  As $r$ decreases down to 1, the limit $\lim_{\sigma\rightarrow\infty}\mathrm{Med}(V_{r,\theta,\sigma})=(r-1)\theta$ is approached quite slowly; for example, $\mathrm{Med}(V_{1.5,1,1000})=0.507$.
\end{remark}

\begin{table}[h]
\centering
\caption{\footnotesize{Median of the $\mathrm{VG}(r,1,\sigma,0)$ distribution.}}
\label{table1}
{\scriptsize
\begin{tabular}{|c|rrrrrr|}
\hline
 \backslashbox{$r$}{$\sigma$}       &    0.1 &    0.3 &    1 & 3 &    10 &    30   \\
 \hline
0.5  & 0.0863 & 0.0798 & 0.0502 & 0.0195 & 0.00582 & 0.00192 \\
1  & 0.454 & 0.444 & 0.380 & 0.276 & 0.198  &  0.157 \\
2.5  & 1.872 & 1.861 & 1.775 & 1.621 & 1.531 & 1.507  \\
5 & 4.350 & 4.338 & 4.246 & 4.084 & 4.012 & 4.001  \\ 
10 & 9.340 & 9.328  & 9.233 & 9.071 & 9.009 & 9.001  \\ 
  \hline
\end{tabular}}
\end{table}

\begin{table}[h]
\centering
\caption{\footnotesize{Median of the McKay Type I distribution with parameters $m$, $b$ and $c$, and $\phi=bc/(c^2-1)=1$.}}
\label{table2}
{\scriptsize
\begin{tabular}{|c|rrrrrr|}
\hline
 \backslashbox{$m$}{$c$}       &    1.01 &    1.04 &    1.2 & 1.8 &    4 &    16   \\
 \hline
0  & 0.458 & 0.466 & 0.517 & 0.619 & 0.679 & 0.692 \\
1  & 2.369 & 2.378 & 2.425 & 2.545 &  2.647 &  2.672 \\
2.5  & 5.351 & 5.361 & 5.408 & 5.526 & 5.637 & 5.668  \\
5 & 10.344 & 10.354 & 10.401 & 10.518 & 10.633 & 10.666  \\ 
10 & 20.341 & 20.350 & 20.397 & 20.514 & 20.630 & 20.665    \\ 
  \hline
\end{tabular}}
\end{table}


\appendix

\section{Modified Bessel functions}\label{appendix}

The following standard properties of the modified Bessel functions  can be found in \cite{olver}.
The modified Bessel functions of the first kind $I_\nu(x)$ and second kind $K_\nu(x)$ are defined, for $\nu\in\mathbb{R}$ and $x>0$, by
\[I_{\nu} (x) =  \sum_{k=0}^{\infty} \frac{(\frac{1}{2}x)^{\nu+2k}}{\Gamma(\nu +k+1) k!} \quad \text{and} \quad K_\nu(x)=\int_0^\infty \mathrm{e}^{-x\cosh(t)}\cosh(\nu t)\,\mathrm{d}t.
\]
For $x>0$, the functions $I_\nu(x)$ and $K_\nu(x)$ are positive for $\nu\geq-1$ and all $\nu\in\mathbb{R}$,  respectively.
We also have that
\begin{align} \label{parity}K_{-\nu}(x)=K_\nu(x), \quad \nu\in\mathbb{R}.
\end{align}
For $\nu=n+1/2$, $n=0,1,2,\ldots$, we have
\begin{equation}\label{special} K_{n+1/2}(x)=\sqrt{\frac{\pi}{2x}}\bigg\{1+\sum_{i=1}^n\frac{(n+i)!}{(n-i)!i!}(2x)^{-i}\bigg\}\mathrm{e}^{-x}.
\end{equation}
In particular,
\begin{equation}\label{special2}K_{1/2}(x)=\sqrt{\frac{\pi}{2x}}\mathrm{e}^{-x}, \quad\!\! K_{3/2}(x)=\sqrt{\frac{\pi}{2x}}\bigg(1+\frac{1}{x}\bigg)\mathrm{e}^{-x}, \quad\!\!  K_{5/2}(x)=\sqrt{\frac{\pi}{2x}}\bigg(1+\frac{3}{x}+\frac{3}{x^2}\bigg)\mathrm{e}^{-x}.
\end{equation}
For the modified Bessel function of the first kind we note the following special cases:
\begin{equation}\label{casei}I_{-1/2}(x)=\sqrt{\frac{2}{\pi x}}\cosh(x), \quad I_{1/2}(x)=\sqrt{\frac{2}{\pi x}}\sinh(x).
\end{equation}
The modified Bessel functions have the following asymptotic behaviour:
\begin{eqnarray}\label{Itend0}I_{\nu} (x) &\sim& \frac{1}{\Gamma(\nu +1)} \left(\frac{x}{2}\right)^{\nu}, \qquad x \downarrow 0, \: \nu>-1, \\
\label{Ktend0}K_{\nu} (x) &\sim& \begin{cases} 2^{|\nu| -1} \Gamma (|\nu|) x^{-|\nu|}, & \quad x \downarrow 0, \: \nu \not= 0, \\
-\log x, & \quad x \downarrow 0, \: \nu = 0, \end{cases} \\
 \label{roots} I_{\nu} (x) &=& \frac{\mathrm{e}^x}{\sqrt{2\pi x}}\bigg(1-\frac{4\nu^2-1}{8x}+O(x^{-2})\bigg), \quad x \rightarrow \infty,\:\nu\in\mathbb{R},  \\
\label{Ktendinfinity} K_{\nu} (x) &=& \sqrt{\frac{\pi}{2x}} \mathrm{e}^{-x}\bigg(1+\frac{4\nu^2-1}{8x}+O(x^{-2})\bigg), \quad x \rightarrow \infty,\: \nu\in\mathbb{R}.
\end{eqnarray}
The following differentiation formulas hold:
\begin{align}
\label{ddbi0}\frac{\mathrm{d}}{\mathrm{d}x}\big( I_0(x)\big)&= I_{1}(x), \\
\label{ddbi}\frac{\mathrm{d}}{\mathrm{d}x}\big(x^\nu I_\nu(x)\big)&=x^{\nu} I_{\nu-1}(x), \\
\label{ddbk}\frac{\mathrm{d}}{\mathrm{d}x}\big(x^\nu K_\nu(x)\big)&=-x^{\nu} K_{\nu-1}(x).
\end{align}

\subsection*{Acknowledgements}
RG is supported by a Dame Kathleen Ollerenshaw Research Fellowship.  MM acknowledges the support by grants III 44006 and 174024 from Ministry of Education, Science and Technological Development of Republic of Serbia.  We would like to thank the referee for his/her helpful comments, which have lead to an improvement in the presentation of the results and organisation of the paper.

\footnotesize

\end{document}